 \newcommand{\dbtilde}[1]{\accentset{\approx}{#1}}
\numberwithin{equation}{section}
\newtheorem{theorem}{Theorem}[section]
\newtheorem{corollary}[theorem]{Corollary}
\newtheorem{lemma}[theorem]{Lemma}
\newtheorem{proposition}[theorem]{Proposition}
\newtheorem{remark}[theorem]{Remark} 
\renewcommand{\leq}{\leqslant}
\renewcommand{\geq}{\geqslant}
\def\N{\mathbb{N}}
\def\R{\mathbb{R}}
\title{Interpolation and approximation via Momentum ResNets and Neural ODEs}
\author{ Domènec Ruiz-Balet$^{1,2}$, Elisa Affili$^1$, Enrique Zuazua$^{1,2,3}$
\thanks{$^1$ Chair of Computational Mathematics, Fundación Deusto Av. de las Universidades 24, 48007 Bilbao, Basque Country, Spain}
\thanks{$^2$ Departamento de Matemáticas, Universidad Autónoma de Madrid, 28049 Madrid, Spain}
\thanks{$^3$ Chair in Applied Analysis, Alexander von Humboldt-Professorship, Department of Data Science Friedrich-Alexander-Universität, Erlangen-Nürnberg, 91058 Erlangen, Germany}}
\begin{document}

\maketitle
\thispagestyle{empty}
\pagestyle{empty}

\begin{abstract}
{\small
In this article, we explore the effects of memory terms in continuous-layer Deep Residual Networks by studying Neural ODEs (NODEs). We investigate two types of models.
On one side, we consider the case of Residual Neural Networks with dependence on multiple layers,  more precisely  Momentum ResNets.
On the other side, we analyse a Neural ODE with auxiliary states playing the role of memory states.

We examine the interpolation and universal approximation properties for both architectures through a simultaneous control perspective. 
We also prove the ability of the second model to represent sophisticated maps, such as parametrizations of time-dependent functions. 
Numerical simulations complement our study.

\smallskip
\textbf{Keywords:} Memory models, NODEs, Momentum ResNets,  Simultaneous Control, Simultaneous tracking controllability, Universal Approximation,  Deep Learning
}
\end{abstract}



\section{Introduction}
\subsection{Objectives}
In the last decades, ever-increasing computational power has enabled the development of Neural Networks with complex architectures to perform in different tasks in Machine Learning.
However, the power of these tools is often measured by engineering experiments, with no deep analysis of the model's potential and limits. 
In this paper, we analyse interpolation and approximation properties of two models involving memory terms in some fundamental tasks of Supervised Learning, namely the representation of functions. We will show in particular that these models have advantages with respect to the Neural ODEs previously studied in \cite{ruizbalet2021neural}.

We first study the continuous-layer version of 
Momentum ResNets, recently proposed in \cite{sander2021momentum}, with a ReLU activation function.  Precisely, the Neural ODE we investigate is
\begin{equation}\label{eq:resnet}
\ddot{x}+\dot{x}=w(t)\sigma(\langle a(t),x\rangle+b(t)),    
\end{equation}
where $x(t)\in \mathbb{R}^d$, $\sigma:\mathbb{R}\to \mathbb{R}$ is the ReLU activation function, $a,w\in L^\infty((0,T),\mathbb{R}^d)$ and $b\in L^\infty((0,T),\mathbb{R})$.  We will prove an interpolation result 
and the universal approximation property.

Then, we introduce a class of Neural ODEs with auxiliary states, $p$, representing the memory of the system, namely
\begin{equation}\label{extended}
\begin{cases}
    \dot{x}=W\boldsymbol{\vec{\sigma}}(Ax  + Cp+ b_1) +b_2,\\
    \dot{p}=u\sigma(\langle d,x \rangle +f),
\end{cases}
\end{equation}
where $x(t)\in \mathbb{R}^d$ and $p(t)\in \mathbb{R}^{d_p}$, $C\in L^\infty((0,T); \mathbb{R}^{d\times d_p})$, $W,A\in L^\infty((0,T); \mathbb{R}^{d\times d})$, $d,b_1,b_2\in L^\infty((0,T); \mathbb{R}^{d}) $, $f\in L^\infty((0,T); \mathbb{R}) $ and $\boldsymbol{\vec{\sigma}}:\mathbb{R}^{d}\to \mathbb{R}^d$ is the componentwise application of the activation function.
We will prove an interpolation result for maps giving a continuous parameterization of time-dependent functions, precisely
\begin{equation}\label{function} M:\Omega\subset\mathbb{R}^d\to C([0,T];\mathbb{R}^d)\cap BV([0,T];\mathbb{R}^d),
\end{equation}
with $\Omega$ a bounded set.
We first investigate the ability of this type of NODEs  to interpolate samples of $M$ for all times in $(0,T)$, and then to approximate $M$ through the flow of \eqref{extended}. 
These correspond, respectively, to the properties  of simultaneous tracking controllability and universal tracking approximation.

\subsection{Context and notation}

{\color{black}
\textit{Supervised Learning} is the branch of Machine Learning that studies how to learn a function based on a finite collection of samples. 
To obtain a solution for these problems, one typically uses a parameterized class of functions; then, the parameters that correspond to the best candidate in the class are found through optimization techniques. 

}

\textit{Neural Networks} (NN) \cite{lecun2015deep} are a common choice of such parameterized classes of functions. In this article, we focus on continuous layer versions of \textit{Residual Neural Networks} (ResNets), introduced in \cite{he2016deep}, which have been observed to outperform of previous classes \cite{kolesnikov2020big}, especially in tasks where a time structure is present, like in text and video recognition. ResNets are a collection of $N_{l}\in\N$ consecutive layers of neurons, where the initial data $\{x_{i}\}_{i=1}^N$, $N\in\N$, are transformed following a relation of the type
{\small
\begin{equation}\label{resnetI}
\hspace{-0.3cm}
\begin{cases}
    x_{k+1,i}= x_k + g(x_k;\omega_k),\ \  k\in\{0,...,N_{\mathrm{l}}-1\},\\
    x_{0,i}=x_i\in\mathbb{R}^d.
\end{cases}
\end{equation}
}
Here, $\omega_k$ are the parameters to be chosen. The peculiarity of ResNets with respect to previous deep NN architectures is the presence of an inertia term $x_k$ in the definition of the term $x_{k+1}$.
To fix the ideas, for the Momentum ResNets,  $g: \R^{3d+1} \to \R^d$ it is going to be
\begin{equation}\label{g}
   g(x;w,a,b):=w\sigma (\langle a,x \rangle +b),
\end{equation}
where $\omega:=\left\{w_k,a_k,b_k\right\}_{k=1}^{N_{\mathrm{l}}}$ are parameters such that $w_k,a_k\in \mathbb{R}^d$ and $b_k\in\mathbb{R}$, and $\langle \cdot, \cdot \rangle$ denotes the scalar product. 
{\color{black}The function $\sigma$ is the so-called \textit{activation function}. When not stated otherwise,  $\sigma:\R\to \R$ will denote the \textit{Rectified Linear Unit} (ReLU)  defined by
\begin{equation}\label{ReLU}
    \sigma (x): = \max \{ x, 0\}.
\end{equation}}
In practical situations, the controls are found through optimization techniques. In the context of continuous-layer models, this is equivalent to solve optimal control problems, see \cite{barcena2021optimal,esteve2020large,yague2021sparse}.

The ResNet architecture \eqref{resnetI} can be seen as an Euler discretization of a continuous time model, see \cite{chen2018neural,esteve2020large,haber2017stable,weinan2017proposal}. 
In fact, as the number of layers $N_l$ goes to infinity, one can understand \eqref{resnetI} as the discretization of the ODE  
\begin{equation}\label{NODE}
    \dot{x}=g(x; \omega(t)),
\end{equation}
as was done in \cite{weinan2017proposal}.
By reading the parameters $\omega(t)$ as time-dependant functions, we can solve the approximation problems from an ODE point of view. 
In fact, for all times $T\geq 0$ there exists a flow
\begin{equation}\label{flux}
    \phi_T(x; w,a,b),
\end{equation}
which is the solution to the equation \eqref{NODE} at $T$ for the initial condition $x$ and functions $w$, $a$ and $b$. We will read $w$, $a$ and $b$ as the controls of the equation \eqref{NODE}.
We will ask if the flow \eqref{flux} at a certain time $T$ is able to approximate a desired function for suitable controls.

One basic question to address is if a NN is able to \emph{interpolate} any set of samples of the target function; that is, given a sample of initial points $\{x_i\}_{i=1}^N\subset \R^d $, with $d$ and $N$ in $\N$, and of target points $\{y_i\}_{i=1}^N\subset \R^d$, we ask if there exist a function $\omega$ such that for all $i=1, \dots, N$ we have that
$\phi_T(x_i, \omega)=y_i$. This is a \emph{simultaneous control} problem \cite{lions1988exact,loheac2016averaged}. Along the paper,  interpolation is equivalent to a simultaneous control problem. 
This issue has been considered, for instance, in  \cite{ruizbalet2021neural} through explicit constructions and in \cite{cuchiero2020deep} using Lie Bracket techniques. 

Another important theoretical question to ask is if, given a function $f: \Omega \subset \R^d\to\R^d$, $d\in\N$, $\Omega$ a bounded set, a NN is able to approximate $f$ as accurately as desired on $\Omega$, for example in the $L^2$ norm. This property of the NN is called \textit{universal approximation}, see for instance \cite{cybenko1989approximation,daubechies2021nonlinear,guhring2021approximation,hornik1989multilayer,li2019deep,pinkus1999approximation}.

Now we turn our attention to being able to approximate more general maps, as the one in \eqref{function}, through a Neural ODE. 
We consider a collection of samples of $M$, $\{y_i=M(x_i)\}_{i=1}^N$ with $y_i\in C^0((0,T);\mathbb{R}^d)\cap BV((0,T);\mathbb{R}^d)$. 
We say that the dynamics \eqref{extended} is \textit{simultaneously tracking-controllable} if there exist a control functions $\omega$ such that the associated flow $\phi_t(x_i,\omega)$ is close to $y_i(t)$ for every $i$ and $t\in[0,T]$. 
The tracking controllability has been studied in some problems in control of PDEs, for instance see \cite{barcena2021optimal}.

We will also discuss the \textit{universal tracking approximation}, where the goal is to approximate the map $M$  with the flow given by the NODE \eqref{extended}.

In \cite{ruizbalet2021neural}, the authors considered the NODE
\begin{equation}\label{NODE}
\dot{x}=w(t)\sigma(\langle a(t),x\rangle+b),
\end{equation}
that from now on, to make a clear distinction, we will call a \emph{first-oder NODE}.
When the initial data and the targets are in $\mathbb{R}^d$, the dynamics of \eqref{NODE}
is not able to interpolate exactly any set of points.
This happens because, by uniqueness of the solution of \eqref{NODE}, one cannot drive two distinct points to the same target; so, if two targets coincide, there cannot be simultaneous exact controllability.

\subsection{The Momentum ResNets}
In the recent paper \cite{sander2021momentum}, the authors proposed a new model of continuous layer ResNets. 
Precisely, instead of \eqref{NODE}, they use the equation with parameters $\theta \in \R^{d\times d}$ given by
\begin{equation*}\label{momnet}
    \varepsilon \ddot{x} + \dot{x} = g(x, \theta), \quad \text{with} \ (x(0),\dot{x}(0))=(x_i, p_i),
\end{equation*}
which is called a \textit{Momentum ResNet}. 
 A major difference 
with respect to NODEs is that two points can share the same position and be distinguished by their velocity. Moreover, Momentum ResNets allows the authors of \cite{sander2021momentum} to use a backpropagation algorithm requiring less memory with respect to the NODEs. 
In this framework, the parameter $\varepsilon$ acts as a measure of the memory of the system, seen as the dependence on previous layers. When $\varepsilon\to 0$, one recovers the NODE.
Regarding the fundamental properties of the Momentum ResNets, the authors investigate the class of functions that can be recovered via the Momentum ResNets with a linear function $g(x)=\theta\, x$, $\theta\in\R^{d\times d}$, proving that they can recover diagonalizable matrices satisfying a certain condition on their eigenvalues.

In this paper, the first model we investigate is the Momentum Resnet
\begin{equation}\label{momnetI}
\begin{cases}
    \ddot{x} + \dot{x} = w(t)\sigma (\langle a(t),x(t) \rangle +b(t)), \\ (x(0),\dot{x}(0))=(x_i, p_i),
\end{cases}
\end{equation}
with $\sigma$ as in \eqref{ReLU} and $w$, $a \in L^{\infty}((0, T); \R^d)$, and $b\in L^{\infty}((0, T); \R)$.
Notice that, for the sake of simplicity of the mathematical dissertation, we took $\varepsilon=1$, thus taking into account the case of a strong memory dependence. 
However, the dynamical properties we will employ are not dependent on this parameter. 
We are going to investigate simultaneous controllability and universal approximation in $L^2$ for the Momentum ResNets as in \eqref{momnetI}. 
Following the intuitions in \cite{ruizbalet2021neural}, our proofs are relying on constructive arguments arising from the analysis of the exact dynamics of \eqref{momnetI}. 

We stress that Momentum ResNets can also be seen as memory like models. Indeed, the problem in \eqref{momnetI}
is equivalent to
\begin{equation}\label{eq:intmem}
\begin{cases}
   \dot{x}=-x+\int_0^tw(s)\sigma(\langle a(s),x\rangle +b(s))ds,\\
   x(0)=x_i,
  \end{cases}
\end{equation}
where we see that the past influences the dynamics of the system. Notice that we will have dependences on at least two layers when considering the discretization of the second derivative of the ODE.

\subsection{A class of Neural ODEs}
A different way to include memory in the system is to have some  auxiliary states playing the role of memory terms in the architecture. We will consider memory models of the form
{\small
\begin{equation}\label{eq:memory}
\begin{cases}
    x_{k+1}=x_k+g_x(x_k,p_k;\omega_k), &  k=1, \dots, N_l-1,   \\
     p_{k+1}=p_k+g_p(x_k,p_k;\omega_k),   &  k=1, \dots, N_l-1, 
\end{cases}
\end{equation}}
where $x \in \R^d$ is the state variable and $p\in \R^{d_p}$, $d_p\in\N$, is considered the memory variable. 
As for the ResNets, we will focus on a continuous time version of \eqref{eq:memory}, precisely
\begin{equation}\label{rnnI}
\begin{cases}
    \dot{x}=w\sigma(\langle a,x \rangle + \langle c,p \rangle+ b), \\
    \dot{p}=u\sigma(\langle d,x \rangle +f),
\end{cases}
\end{equation}
with $a$, $b$, $w$ as before, $c, u, d\in L^{\infty}((0, T); \R^d)$, and $f\in L^{\infty}((0, T); \R)$.
Note that, if we set the initial memory to be $0$, the equation can be written as:
$$
   \dot{x}=w\sigma\left(\langle a,x \rangle +\left\langle c, \int_0^t u\sigma\left(\langle d,x\rangle +f\right)ds\right\rangle +b\right).\
$$

Models with memory states, even continuous ones, are not new in the literature, see \cite{zhang2016learning} and the references therein.  
By using the properties of the first-order NODEs \eqref{NODE} analysed in \cite{ruizbalet2021neural},
it will be easy to deduce  simultaneously controllability and the universal approximation property for \eqref{rnnI}. 

The simultaneous controllability for \eqref{rnnI} allows to control both the state and the memory variable. This property is, for us, crucial to prove the simultaneous tracking controllability and universal tracking approximation. However, in order to do so, we need to extend the system \eqref{rnnI} to a more general one, given in \eqref{extended}. The model \eqref{extended} (as well as \eqref{rnnI}) could be understood as a Neural ODE in a higher dimensional space, in which we only consider some components as the state. 
In particular, we will consider the components of $p$ as memory states.
We will refer to \eqref{extended} as a \emph{Memory NODE}, to make a even clearer distinction with respect to the {first-order NODE} in \eqref{NODE}.

We are able to prove simultaneous tracking controllability for Memory NODEs; in particular, we will be able to control the trajectories on an interval $[0, T]$ for any time horizon $T>0$.
In our proof, the price to pay to obtain the simultaneous tracking controllability property is to consider a minimum dimension on the memory. 
In fact, we need $p\in\R^{d_p}$ with $d_p =2d$, which means that we have to consider a system that has bigger size with respect to \eqref{momnetI} and \eqref{NODE}. 
Moreover, also the controls take values in a higher dimensional space. 
However, we do not know if this is a fundamental requirement or if it is just technical.
Given the results of simultaneous tracking control and universal approximation, the {universal tracking approximation} follows as a corollary.

\subsection{Organization of the paper}

The paper is organized as follows:

\begin{itemize}
    \item In Section \ref{ss:mainres}, we state in detail the results concerning Momentum ResNets, and we will discuss the dynamical properties that allows us to prove the simultaneous controllability and the universal approximation theorem. The proofs for these theorems are in Appendix \ref{s:proofmr}.
    \item In Section \ref{ss:rnn}, we state the main results concerning Memory NODEs, the simultaneous control of the state-memory pair, the tracking simultaneous controllability, and we will prove the universal tracking approximation. The complete proofs for these theorems are in Appendix \ref{appsimcontr}.
    \item In Section \ref{ss:simus}, we will complete the study providing numerical simulations for those models in the discussed tasks. Specifically, we applied the systems to the case of indicator and sinusoidal functions.
    \item We will give some final comments in Section \ref{s:conslusion}.
\end{itemize}

\section{The Momentum ResNets}\label{ss:mainres}

In the first part of this section, we state and comment the results for the Momentum ResNets. 
In Subsection \ref{s:toolbox}, we will present and comment the dynamics of \eqref{momnetI}. In Subsection \ref{ss:ideas}, we will give a sketch of the proofs.
The complete proofs of the results are presented in the Appendix \ref{s:proofmr}.

\medskip
\textbf{Simultaneous controllability-Interpolation.}
We now consider the problem of simultaneous controllability, which is, driving with the same controls each point $x_i$ in the collection $\{x_i\}_{i=1}^N$, with $N\in\N$, to its given target $y_i$.  

\begin{theorem}\label{sim:control}
    Fix $T>0$, let $d\geq 2$ and let \newline $\{(x_i, 0),y_i\}_{i=1}^N\subset \mathbb{R}^{d}\times\mathbb{R}^d\times\mathbb{R}^d$, with $N\in\N$, such that $x_i\neq x_j$  if $i\neq j$.
 Then, there exist control functions $w,\ a\in L^\infty\left((0,T),\mathbb{R}^d\right)$ and $b\in L^\infty\left((0,T),\mathbb{R}\right)$ such that the flow associated to $a$, $w$, $b$ according to \eqref{momnetI} satisfies
 \begin{equation}\label{c:control}
     y_i=P_x\phi_T((x_i, p_i);a,w,b) \quad \forall i\in\{1,...,N\}.
 \end{equation}
 where $P_x$ is the projection to the $x$ component of the system.

\end{theorem}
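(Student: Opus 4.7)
The plan is to mimic the constructive strategy of \cite{ruizbalet2021neural} for first-order NODEs, adapted to the damped inertia of \eqref{momnetI}. The key building block is an \emph{atomic manoeuvre} that transports a single chosen data-point to a prescribed location and brings its velocity back to zero, while leaving all other points at rest; $N$ such manoeuvres on disjoint subintervals of $[0,T]$ then deliver simultaneous interpolation.

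I would begin by analysing the dynamics on any subinterval where the parameters $(a,w,b)$ are constant. Setting $y=\langle a,x\rangle+b$, in the inactive half-space $\{y\leq 0\}$ the system reduces to $\ddot x+\dot x=0$, whose solutions $x(t)=x(0)+\dot x(0)(1-e^{-t})$ keep points at rest at rest. In the active half-space, projecting onto $a$ gives $\ddot y+\dot y=\langle a,w\rangle y$, while components orthogonal to $w$ satisfy the same free equation. Choosing $a\perp w$ makes $y$ constant, and the displacement $u=\langle w,x\rangle/|w|$ then obeys the scalar linear ODE $\ddot u+\dot u=|w|y$, whose planar $(u,\dot u)$ system is fully controllable.

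Next I would construct the atomic manoeuvre. Since $d\geq 2$ and the $x_i$ are distinct, one can choose $b\in\R$ and a unit $a\in\R^d$ so that the chosen point $x_j$ is the only one in the active region; one then picks $w\perp a$ parallel to the required displacement $y_j-x_j$. Applying a constant control $w$ on a first interval of length $\tau_1$ accelerates $x_j$ alone; switching to $-w$ on a second interval of length $\tau_2$ decelerates it, and explicit integration shows that the terminal velocity vanishes precisely when $\tau_2=\ln(2-e^{-\tau_1})$, with total displacement $|w|y(\tau_1-\tau_2)$. The two free parameters $|w|$ and $\tau_1$ can thus be tuned to reach $y_j$ exactly with zero terminal velocity; during the entire manoeuvre $y$ is constant (because $\langle a,w\rangle=0$), so $x_j$ stays active and the others inactive. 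Iterating with a fresh separating hyperplane at every stage and concatenating the piecewise-constant controls produces the required simultaneous interpolation; fitting all manoeuvres into $[0,T]$ is harmless, since enlarging $|w|$ while shrinking $\tau_1,\tau_2$ preserves the achievable displacement.

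The main obstacle is the atomic manoeuvre itself. Unlike in the first-order case, switching off the nonlinearity does not halt a particle: the free equation $\ddot x+\dot x=0$ keeps translating a moving point while its velocity decays only asymptotically, so a naive push-and-release strategy would leave a residual velocity that, at the next stage, would drag the "finished" particle off its target. We therefore cannot merely release; we must actively brake $x_j$ while it is still in the active region, which is why the push/counter-push design is essential and relies on the controllability of the planar linear system $(u,\dot u)$. The remaining technical points --- that $x_j$ does not leave the active half-space and the others do not enter it during either push phase, and that the two constraints (exact position, zero velocity) are solvable by the two free parameters --- are routine linear-system checks once the framework above is in place.
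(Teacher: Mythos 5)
The atomic push/counter-push manoeuvre you describe is itself sound: projecting onto $w$ gives $\ddot u+\dot u=\pm|w|y$, and a brief computation confirms that with $\tau_2=\ln(2-e^{-\tau_1})$ the terminal velocity vanishes and the net displacement is $|w|y(\tau_1-\tau_2)$, a quantity that ranges over all of $(0,\infty)$; the orthogonality $w\perp a$, combined with the zero initial velocity, indeed keeps $y=\langle a,x\rangle+b$ constant during both phases. The gap lies entirely in the step you dismiss as routine, namely the existence of the isolating hyperplane.

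The claim ``since $d\geq 2$ and the $x_i$ are distinct, one can choose $b$ and a unit $a$ so that $x_j$ is the only point in the active region'' is false: a half-space $\{\langle a,\cdot\rangle+b>0\}$ can single out $x_j$ only if $x_j$ is a vertex of the convex hull of the data. Three distinct collinear points in $\R^2$ already defeat it, since the middle one is a convex combination of the other two and no affine function can be strictly positive there and nonpositive on both endpoints. Your design adds a second, independent constraint that makes the problem strictly harder: you also need $a\perp(y_j-x_j)$, so for $d=2$ the direction of $a$ is forced up to sign and $x_j$ must be extremal in exactly that direction. Finally, even granting separability at stage one, after a few manoeuvres some points sit at their targets $y_i$ (which the theorem allows to coincide), and there is no reason the next unmoved $x_j$ is separable from the mixed configuration of originals and placed targets. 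None of these issues is addressed.

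The paper circumvents all three difficulties at once by never requiring a static separating hyperplane. It first perturbs the data so that all first coordinates are pairwise distinct; then, to isolate a given point, it \emph{actively pushes} every other point far away in the second coordinate (those with larger $x^{(1)}$ in one sweep, those with smaller $x^{(1)}$ in another), after which the chosen point sits alone in a half-space $\{x^{(2)}>c\}$. It also controls one coordinate at a time, using the damping regime to let the controlled coordinate coast to its target at the common final time $T$, rather than a one-shot displacement with a velocity reset. If you want to salvage your cleaner push--brake manoeuvre, you would need to prepend a similar preparation/clearing phase that manufactures the separating hyperplane; as written, the proof does not go through.
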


\begin{remark}[Complexity of the controls]
The proof is constructive and makes use of piecewise constant controls. The maximum number of required  switches is of the order of N.
\end{remark}

The proof of Theorem \ref{sim:control} is similar to the proof of the Simultaneous Control Theorem in \cite{ruizbalet2021neural}. The idea is to construct a flow driving the points to their targets through piecewise constant controls, for which the behaviour of the system can be written explicitly.

An advantage of Momentum ResNets with respect to the first order Neural ODE is the exact interpolation for \emph{any} target. 

\medskip
Here we will show that one can make use of the flow of \eqref{momnetI} to have a universal approximation.

In \cite{ruizbalet2021neural}, one shows that the simultaneous control result plus the possibility of generating contractive flows in every Cartesian direction enables to have a universal approximation. 
In the case of Momentum ResNets, define
$$S_i:=\mathbb{R}^{i-1}\times\mathbb{R}_+\times \mathbb{R}^{d-i}$$
and let $V_i\subset \R^d$.
We say that the dynamics of \eqref{momnetI} can generate contractive flows in the $i-$th component if there exists some controls $\omega_i$ and an open set of velocities $V_i$, with $\{0\}^d\in V_i$, such that 
the map $\phi_{t}(\cdot;\omega_i)$ satisfies
$$P_x (\phi_{t}(S_i,V_i;\omega_i))\subset \mathbb{R}^{i-1}\times \mathbb{R}_+\times \mathbb{R}^{d-i}, \quad \forall t\geq 0,$$
$$ \lim_{t\to +\infty} P_x \left( \phi_{t}(x,v;\omega_i)\right)^{(i)}= 0, \quad \forall \ x\in S_i, \  v\in V_i.$$


Momentum ResNets are able to generate a contractive flow in all the components, see Figure \ref{fig:toolboxmr}. However, the difference with respect to the first order NODEs \eqref{NODE} is that for Momentum ResNets such contraction cannot be done in arbitrarily short time; this is the reason why Theorem \ref{sim:control} requires a minimal controllability time.

Nonetheless, it has to be mentioned that since the proof is constructive, it does not mean that such universal approximation cannot be obtained by other means with arbitrary short time.




\begin{corollary}\label{thm:ua}
Let $f\in L^{2}(\Omega;\mathbb{R}^d)$ with $\Omega \subset \R^d$ a bounded domain. Then, for any $\epsilon>0$,  there exist $T_{min}>0$, dependent on $\epsilon$, such that for all $T\geq T_{min}$ there exist controls $w,a\in L^\infty((0,T),\mathbb{R}^{d})$,  and $b\in L^\infty((0,T);\mathbb{R})$  whose associated flow according to \eqref{momnetI} satisfies
$$\|f(\cdot)-P_x\phi_T((\cdot,0);a,w,b)\|_{L^2(\Omega)}\leq \epsilon,$$
where $P_x$ is the projection to the state component of the system.
\end{corollary}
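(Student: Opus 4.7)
My plan follows the template of the universal approximation proof for first-order NODEs in \cite{ruizbalet2021neural}, combining a contraction phase with the simultaneous interpolation Theorem \ref{sim:control}. The argument proceeds in three steps.

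\emph{Step 1 (Piecewise constant approximation).} I first approximate $f$ in $L^2(\Omega)$ by a simple function $\tilde f = \sum_{j=1}^N y_j \mathbf{1}_{Q_j}$, where $\{Q_j\}_{j=1}^N$ is a finite partition of $\Omega$ into pairwise disjoint cubes of small side-length $\delta$. By density of simple functions in $L^2$, for $\delta$ sufficiently small one has $\|f - \tilde f\|_{L^2(\Omega)} \le \epsilon/2$.

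\emph{Step 2 (Contraction phase).} Using the contractive flow property recalled in the discussion preceding the statement, I design piecewise constant controls that, applied to the Momentum ResNet \eqref{momnetI}, collapse each cube $Q_j$ successively along each of the $d$ Cartesian directions onto a small neighborhood of a chosen representative point $x_j \in \mathbb{R}^d$. Concretely, to contract in direction $i$ one first translates the current configuration by an affine flow so that it lies in the half-space $S_i$, then applies the contractive controls $\omega_i$ for sufficient time, and finally undoes the translation. The representatives $x_j$ are chosen pairwise distinct. Let $T_1$ denote the total time consumed by this phase; since the contraction in \eqref{momnetI} is driven by a damped linear ODE, the diameter $\rho$ of the image of each $Q_j$ at time $T_1$ can be made arbitrarily small by taking $T_1$ large enough. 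This is precisely the source of the minimal time $T_{\min}$ appearing in the statement, in contrast to the first-order NODE case. Allowing a short additional free-damping interval governed by $\ddot x + \dot x = 0$ at the end of this phase, the resulting velocities can also be made arbitrarily small.

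\emph{Step 3 (Interpolation phase).} At the end of Step 2, each $x \in Q_j$ lies within distance $\rho$ of $x_j$ with velocity of norm at most $\rho$. I apply Theorem \ref{sim:control} on the time interval $[T_1, T_1 + T_2]$ to the collection $\{(x_j, 0), y_j\}_{j=1}^N$ to obtain controls whose flow satisfies $P_x \phi_{T_2}(x_j, 0) = y_j$. By continuous dependence of solutions of \eqref{momnetI} on initial data over the bounded time interval $[0, T_2]$, the flow starting from the slightly perturbed configuration produced by Step 2 ends within any prescribed $L^\infty$-distance of $\{y_j\}_{j=1}^N$, provided $\rho$ was chosen small enough at the end of Step 2. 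Concatenating the controls of Steps 2 and 3 and setting $T = T_1 + T_2$, the resulting flow $\psi_T$ satisfies $\|\tilde f - P_x \psi_T(\cdot, 0)\|_{L^2(\Omega)} \le \epsilon/2$, and the triangle inequality yields $\|f - P_x \psi_T(\cdot, 0)\|_{L^2(\Omega)} \le \epsilon$.

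The main obstacle is the quantitative coupling between the minimal time $T_{\min}$ and the accuracy $\epsilon$: because contractions in \eqref{momnetI} are not instantaneous, one must carefully match the contraction rate (controlled by the damping and the magnitudes of $w$, $a$) to the target diameter $\rho$, while simultaneously ensuring that the velocities at the end of Step 2 are small enough that Theorem \ref{sim:control} can be effectively invoked with its zero-initial-velocity hypothesis.
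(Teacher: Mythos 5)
Your overall strategy coincides with the paper's: approximate $f$ by a simple function supported on hyperrectangles, compress each hyperrectangle to a point via the contractive flows of \eqref{momnetI} (this compression being the source of the minimal time $T_{\min}$), apply Theorem \ref{sim:control} to the distinct representatives, and conclude by continuity of the flow with respect to the initial data, after a free-damping interval that makes the residual velocities small. All of these ingredients appear in the paper's argument in the same order and for the same reasons.

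There is, however, one concrete gap in your Step 2. You take the $Q_j$ to be a \emph{partition} of $\Omega$ into adjacent cubes and claim to collapse each one onto its own representative. But every flow available for \eqref{momnetI} is activated on a full half-space $\{\langle a,x\rangle+b>0\}$ and frozen (up to damping) on the complement; to contract one cube without simultaneously dragging its neighbours, the separating hyperplane must be placed strictly between the cubes, which is impossible when they tile $\Omega$ with no gaps. The paper circumvents this by first removing from $\bigcup_m\Omega_m$ a union of thin strips of total measure $h$, so that the remaining slightly shrunken hyperrectangles are pairwise separated by a positive distance $\zeta$ in which the activation hyperplanes can be placed; the compression is then performed layer by layer, interleaved with parallel displacements that keep the images of distinct hyperrectangles separated. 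The points lying in the discarded strips are not driven to any target: they are merely confined to a fixed box of side comparable to $\max_m|\alpha_m|$, and their contribution to the $L^2$ error is bounded by a constant times $\sqrt{h}$, which is absorbed by choosing $h$ small relative to $\epsilon$. Without this decomposition (or an equivalent device) your contraction step cannot produce pairwise distinct representatives with disjoint preimages, and the cube-by-cube continuity argument in your Step 3 does not go through. The rest of your proposal, including the treatment of the velocities and the role of $T_{\min}$, is consistent with the paper.
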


\subsection[Dynamical features]{Dynamical features of the Momentum ResNets} \label{s:toolbox}
{
\color{black}
Let us briefly describe the different types of flows that allow us to guarantee the results. For doing so, we will take controls $a,\  w\in \mathbb{R}^d$ and $b\in\mathbb{R}$; in this Subsection, we consider them to be time independent. More specifically, we will analyze a component at a time; fix $i\in\{1,...,d\}$ and consider $a$ in the form $a^{(k)}=\delta_{i,k}$, where $\delta_{i,k}$ is the Kronecker delta. By a simple translation, it is enough to understand the flows for $b=0$.
The ODE \eqref{momnetI} can be rewritten as: 
\begin{equation}
    \begin{cases}
    \ddot{x}+\dot{x}+w(t) \langle a(t),x\rangle =0 &\text{ if } \langle a(t),x\rangle>0\\
    \ddot{x}+\dot{x} =0&\text{ if } \langle a(t),x\rangle\leq 0    \end{cases}
\end{equation}
which component-wise is equivalent to
\begin{equation}\label{sys:ij}
    \begin{cases}
    \ddot{x}^{(j)}+\dot{x}^{(j)}+w^{(j)} x^{(i)} =0, &\text{ if } x^{(i)}>0,\\
    \ddot{x}^{(j)}+\dot{x}^{(j)} =0, &\text{ if } x^{(i)}\leq 0,    \end{cases}
\end{equation}


Of all possible cases, take into account these three:

\begin{enumerate}[leftmargin=5mm]
 \item $\boxed{\ddot{x}^{(i)}+\dot{x}^{(i)} =0}$ 
 Thus, we analyse the dynamics in the inactive hyperplane of the activation function.
Denoting $\dot{x}=p$, the solution is
\begin{equation}\label{dampedsol}
  \begin{array}{l}
    x^{(j)}(t)=x^{(j)}(0)+(1-e^{-t})(p^{(j)}(0)) \\
    p^{(j)}(t)=p^{(j)}(0)e^{-t}
   \end{array}
\end{equation}

 \item $\boxed{\ddot{x}^{(i)}+\dot{x}^{(i)}+w x^{(i)} =0 }$
    It corresponds to the case in which we are in the side of the space in which the activation function is not zero and $j=i$. The dynamics is
    \begin{equation*}
    \begin{pmatrix}
       x\\
       p
      \end{pmatrix}'=\mathcal{A}\begin{pmatrix}
      x\\
      p\end{pmatrix}=\begin{pmatrix}
      0 & 1\\
      -w & -1
      \end{pmatrix}\begin{pmatrix}
      x\\
      p\end{pmatrix},
      \end{equation*}
    where the eigenvalues and eigenvectors of the matrix $\mathcal{A}$ are given by
    $$\lambda_{\pm}=\frac{1}{2}\pm \sqrt{\frac{1}{4}-w},\ v_{\pm}=\begin{pmatrix}
               1\\
               -\frac{1}{2}\pm \sqrt{\frac{1}{4}-w}
              \end{pmatrix}.
    $$
    Here, three cases arise
      \begin{enumerate}
         \item if $w\in(0,1/4)$, the origin is an attractor; 
         \item if $w\geq 1/4$, the dynamics behaves as a damped oscillator;
         \item if $w<0$, the origin is a saddle point.
      \end{enumerate}
    \item $\boxed{\ddot{x}^{(j)}+\dot{x}^{(j)}+w^{(j)} x^{(i)} =0 }$ with $i\neq j$. Consider $w^{(j)}\neq 0$, 
    and assume that the quantity $q=w^{(i)}x^{(i)}$ is time independent. Then, the solution is
    {\small
    \begin{equation} \label{sol:parallel}
    \begin{split}
             x^{(j)}(t) &=x^{(j)}(0)+(1-e^{-t})p^{(j)}(0)\\
             &+q(t-1+e^{-t}),\\
         p^{(j)}(t) &= e^{-t}p^{(j)}(0)+q(1-e^{-t}).
    \end{split}
    \end{equation}  
    }


\end{enumerate}

}



\begin{figure}
    \centering
    \includegraphics[scale=0.22]{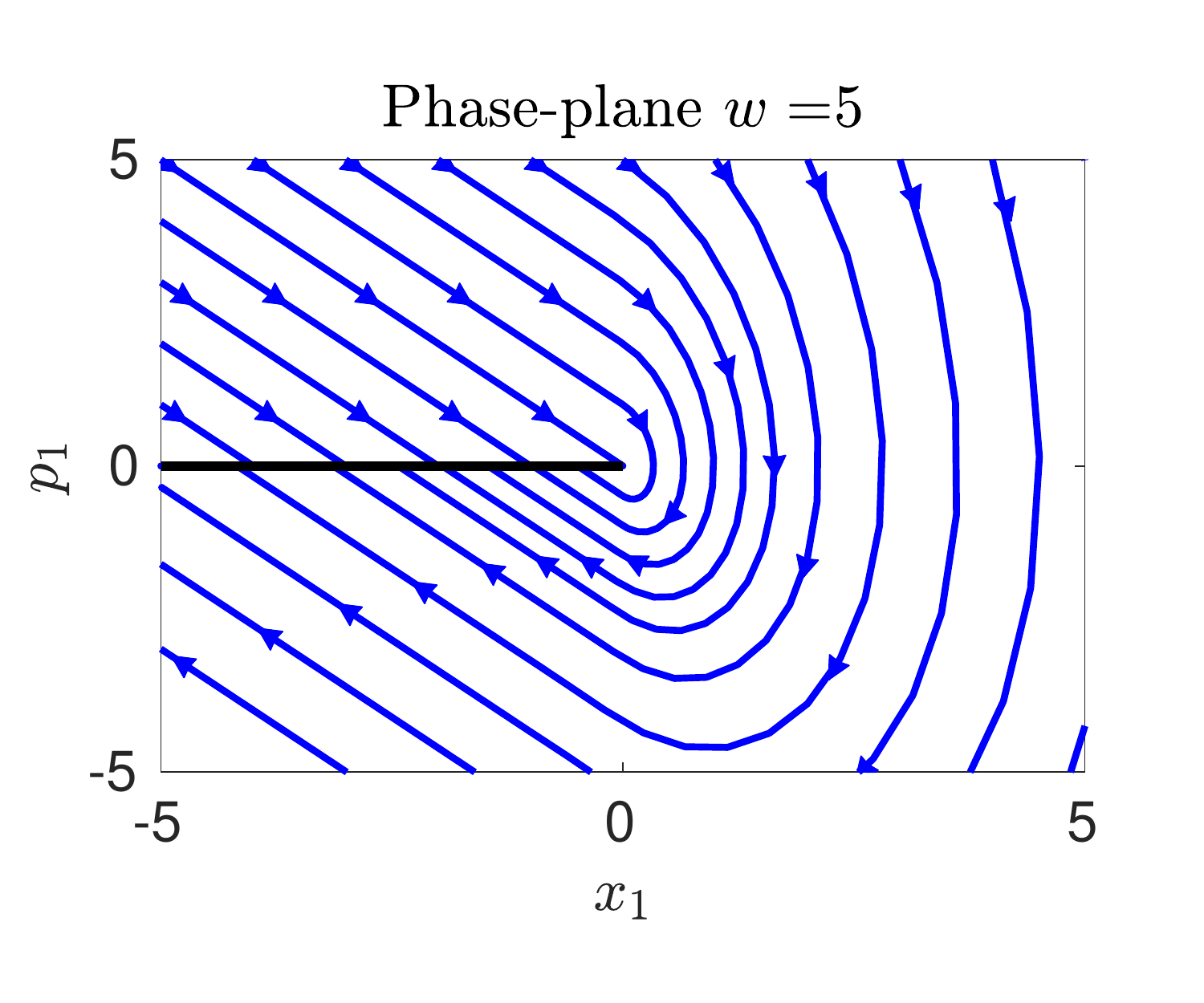}
        \includegraphics[scale=0.22]{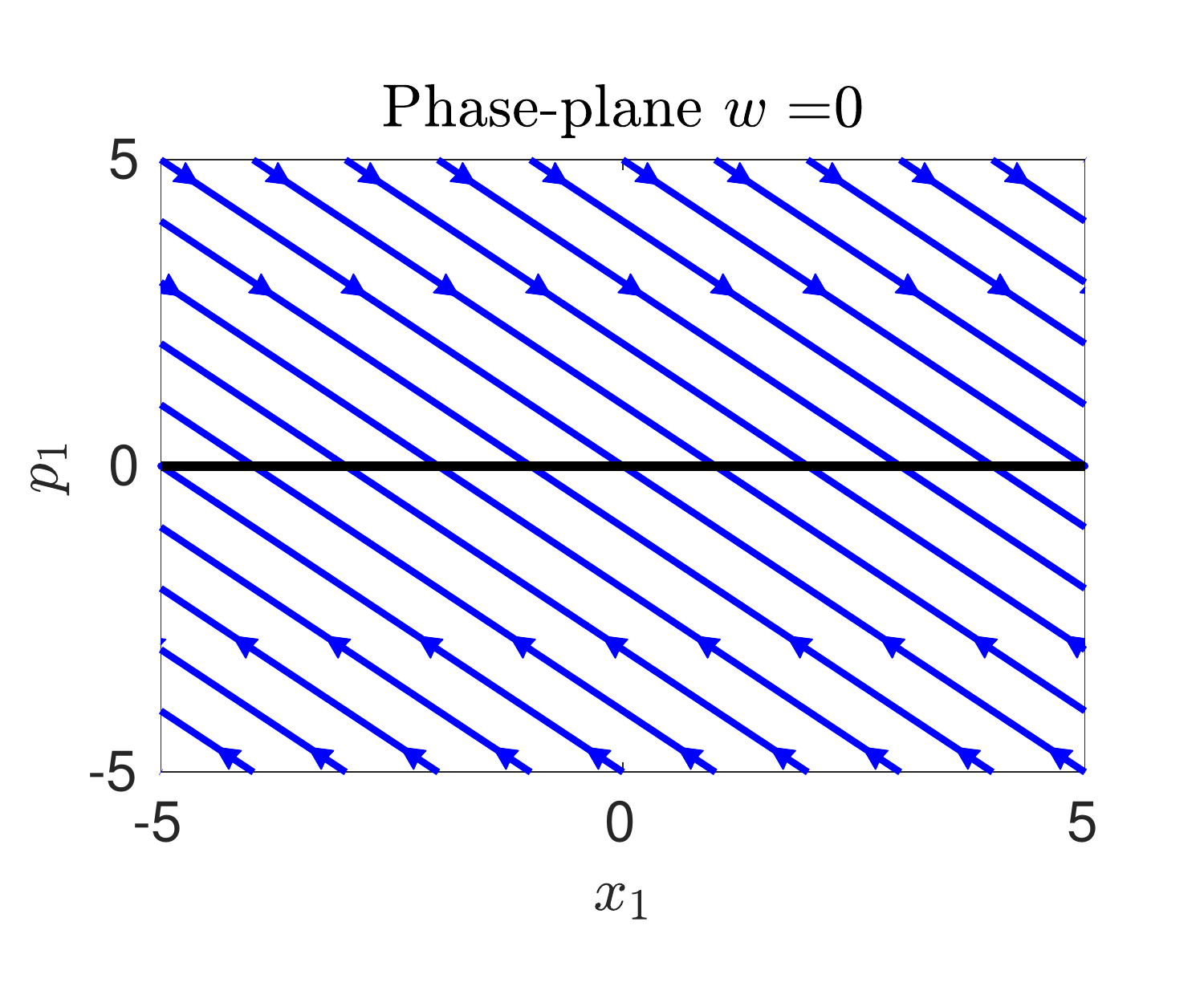}
        \includegraphics[scale=0.22]{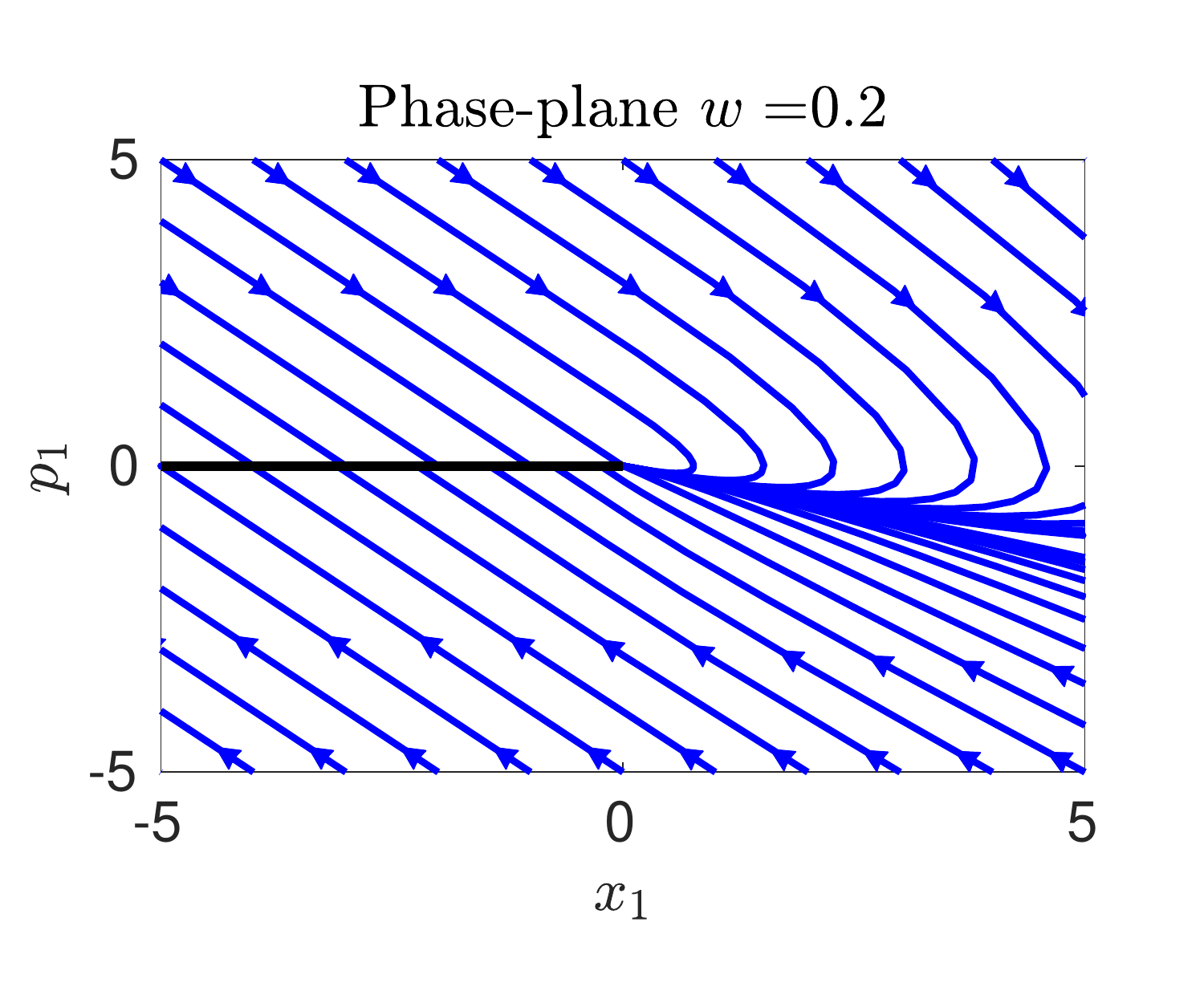}
    \caption{ {\footnotesize Phase plane in $x^{(1)}$ and $p^{(1)}$ for different flows, the black line are critical points for the dynamics. (Top-left): $w=5$ the half-oscillatory regime. (Top-right): $w=0$ damping regime. Bottom: $i=j$ with $w=0.2$ for $x_1>0$ (compressive flow).}}
    \label{fig:toolboxmr}
\end{figure}

\subsection{Ideas of the proofs}\label{ss:ideas}
{\color{black}
The control strategy will consist on splitting the time interval $(0,T)$ in two $(0,T/2)$ and $(T/2,T)$ and each interval is also divided in several subintervals and proceed interatively. In $(0,T/2)$ the goal will be to act appriopiately on the first component of each point whereas in $(T/2,T)$ the objective will be to act on the remaining components for each point. The argument is similar in both intervals, very schematically the procedure is: 
\begin{enumerate}[leftmargin=5mm]
    \item In each subinterval a specific point is chosen
    \item One chooses a control that will perturb the dynamics of several points with the goal of modifying appropriately the trajectory of the chosen point so that the free dynamics will bring the point to its target.
    \item We will make use of the fact that the control acts only in half space to not perturb the trajectories that have already been appropriately controlled.
\end{enumerate}
Setting $x(0)$ and $p(0)$ in \eqref{dampedsol} as the output of \eqref{sol:parallel} one arrives at:
 \begin{equation*}\begin{split}
     x^{(j)}(T)=x^{(j)}(0)+(1-e^{-t})p^{(j)}(0)+\\
     (1-e^{-(T-t)})(e^{-t}p^{(j)}(0))+\\
     q(t-1+e^{-t}+(1-e^{-(T-t)})(1-e^{-t}))
    \end{split}
     \end{equation*}
     One can realize that, for any $T>t>0$, the expression above is a line, and therefore, one can always choose $q$ so that at the time $T$ the state is at the desired position.

 \begin{figure}
    \centering
    \includegraphics[scale=0.5]{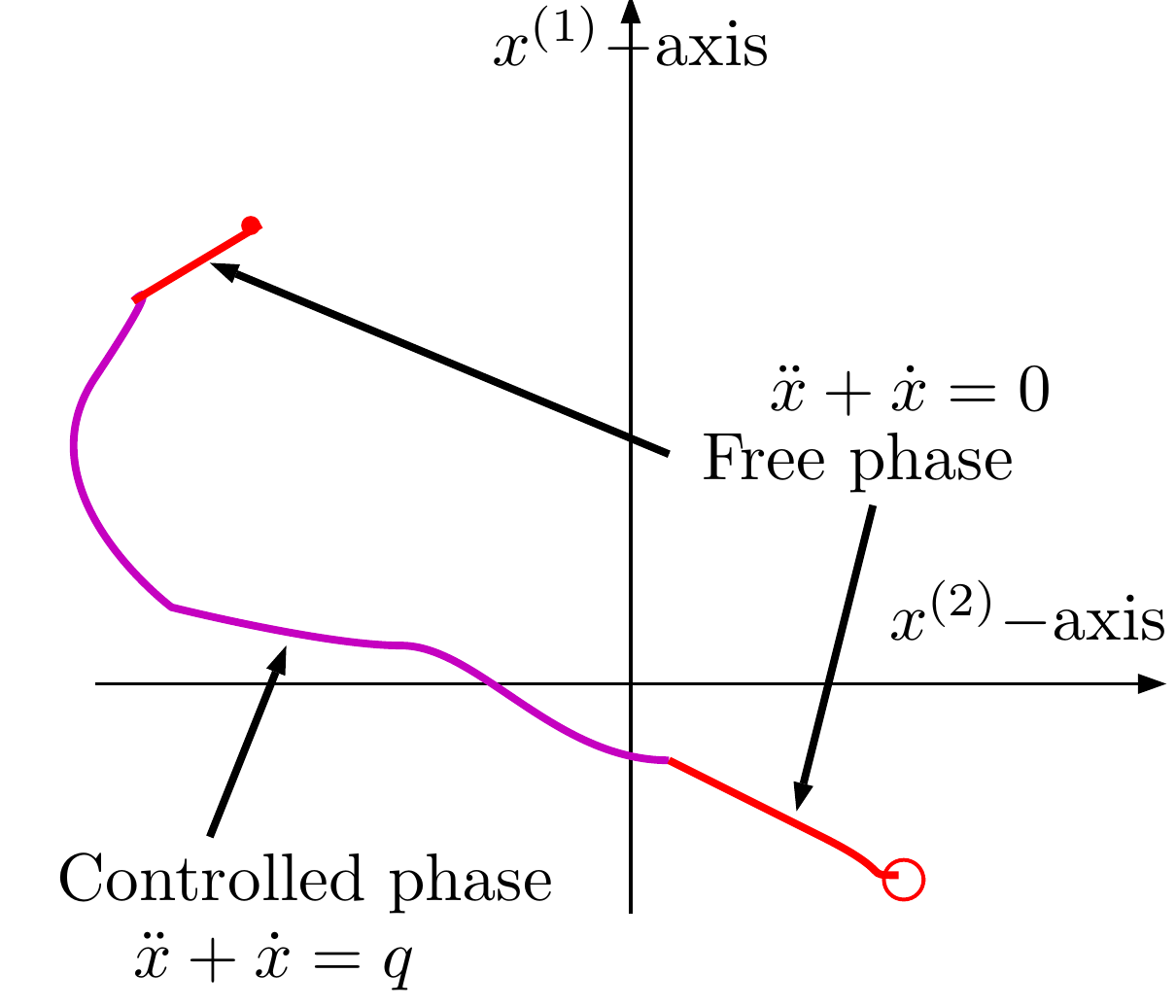}
    \caption{ {\footnotesize Representation of the strategy used for controlling points. The red point  (in the 2nd quadrant) represents the point one aims to control to the red circle (in the 4rth quadrant). In red, the free (uncontrolled) trajectory of the point when the control is not active in the region where the point is. In purple, the part of the trajectory where the control is active. The controlled phase ends before reaching the target, letting the point naturally reach the target following the free dynamics.}}
    \label{controlfree}
\end{figure}
}


The universal approximation follows with the same arguments than in \cite{ruizbalet2021neural}:
\begin{enumerate}[leftmargin=5mm]
 \item Consider a suitably fine mesh made out of hyperrectangles, and apply a compression to each rectangle.
 \item By Theorem \ref{sim:control}, we control a point of each compressed rectangle to its target. We can then conclude by continuity of the ODE with respect to the initial data.
\end{enumerate}
We point out that for $w\in (0,1/4)$ we can generate a compressive flow. One can see that using the eigenvectors of $\mathcal{A}$ one can define an invariant region inside $\{x^{(i)}\geq 0\}$ such that all the trajectories converge to the origin $(0,0)$, see Figure \ref{fig:toolboxmr}.

\section{A Neural ODE model}\label{ss:rnn}
\subsection[Simultaneous control]{Simultaneous control of the  state-memory pair}

Let $d,d_p\in \mathbb{N}$ and consider the following ODE system:
\begin{equation}\label{CRNN}
  \begin{cases}
   \dot{x}=w(t)\sigma\left(\langle a(t),x \rangle +\langle c(t), p\rangle +b(t)\right),\\
   \dot{p}=u(t)\sigma\left(\langle d(t),x\rangle +f(t)\right),\\
   x(0)=x_i,\quad p(0)=0.
  \end{cases}
\end{equation}
We will call $x$ the state component of the system and $p$ the memory component.

Our first goal is to be able to simultaneously control both the state and the memory at the final time for any target configuration with distinct state-memory pairs. The fact of being able to control also the memory will be key for the simultaneous tracking control problem that we will describe later on.

\begin{proposition}[Memory-Simultaneous controllability]\label{memorysimcontr}
 Let $d,d_p\in\mathbb{N}$, $T>0$ and
  consider $\{(x_i,p_i)\}_{i=1}^N\subset \mathbb{R}^d\times \mathbb{R}^{d_p}$ to be $N$ distinct initial data for \eqref{CRNN} and let us consider  $\{(y_i,\varphi_i)\}_{i=1}^N\subset \mathbb{R}^d\times \mathbb{R}^{d_p}$ to be distinct target points.
 Then, there exist controls $w,a,d,b_2\in L^\infty((0,T);\mathbb{R}^d)$,  $u,c\in L^\infty((0,T);\mathbb{R}^{d_p})$ and $b_1,f\in L^\infty((0,T);\mathbb{R})$ such that 
 $$ \phi_T((x_i,p_i);\omega)=(y_i,\varphi_i)\quad i\in\{1,...,N\} $$
 where by $\phi_T((x,p),\omega)$ we denote the solution at time $T$ of \eqref{CRNN} with initial data $(x,p)$ and controls $\omega=\{w,a,c,b_1,b_2,u,d,f\}$.
\end{proposition}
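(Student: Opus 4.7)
The plan is to split the time interval $[0,T]$ into two or three sub-intervals and iterate the simultaneous controllability result for first-order Neural ODEs proved in \cite{ruizbalet2021neural}. The key observation is that by turning off one of the two ReLU neurons in \eqref{CRNN} we can alternately move the state $x$ or the memory $p$ while keeping the other component frozen. I would first pick $N$ pairwise distinct auxiliary points $\{\tilde y_i\}_{i=1}^N\subset\mathbb{R}^d$ lying on a strictly convex curve, so that each $\tilde y_i$ is an extreme point and can be strictly separated from the others by an affine hyperplane.

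On the first sub-interval $[0,T/2]$ I set $c\equiv 0$ and $u\equiv 0$, which freezes $p$ at $p_i$ and reduces the first equation to the first-order NODE $\dot x = w\sigma(\langle a,x\rangle + b)$. The interpolation theorem of \cite{ruizbalet2021neural}, which requires $d\ge 2$, then provides controls driving $x_i\mapsto\tilde y_i$. The only subtlety is that two initial pairs may share the same $x$-component, $x_i=x_j$ with $p_i\ne p_j$; in that case I would prepend a short pulse with $u\equiv 0$ and $c$ chosen so that $\langle c,p_i\rangle\ne\langle c,p_j\rangle$, giving the two trajectories different velocities and separating them in finite time without altering any $p_\ell$. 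On the second sub-interval $[T/2,T]$ I set $w\equiv 0$, freezing $x$ at $\tilde y_i$, so that the memory equation becomes $\dot p = u(t)\sigma(\langle d(t),\tilde y_i\rangle + f(t))$, with $p$ absent from the right-hand side. I then partition $[T/2,T]$ into $N$ consecutive subintervals; on the $k$-th, the convex-position assumption provides $(d_k,f_k)$ for which only the ReLU corresponding to $\tilde y_k$ is active, so the increment of $p$ during that subinterval is a prescribed scalar multiple of the free vector $u_k\in\mathbb{R}^{d_p}$, which I choose so that $p_k$ lands exactly at $\varphi_k$ while the other memories remain untouched.

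If the targets $y_i$ happen to be distinct I can take $\tilde y_i=y_i$ and stop. In general, the distinctness of the pairs $(y_i,\varphi_i)$ only excludes the simultaneous occurrence $y_i=y_j$ and $\varphi_i=\varphi_j$, so a third phase may be necessary. On a final sub-interval I set $u\equiv 0$, so $p\equiv\varphi_i$, and the $x$ equation becomes $\dot x = w\sigma(\langle a,x\rangle + \langle c,\varphi_i\rangle + b)$. The new feature is the per-point bias $\langle c,\varphi_i\rangle$, which differs across indices sharing the same $y$-target; this extra degree of freedom allows the piecewise-constant construction of \cite{ruizbalet2021neural} to be adapted so as to send distinct points to the same $y$-value, since uniqueness holds in the lifted $(x,p)$-phase space rather than in $x$ alone. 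The main obstacle is precisely this adaptation of the first-order simultaneous control argument to the presence of a memory-dependent bias: the rest of the argument merely iterates the tools of \cite{ruizbalet2021neural}, and as in Theorem \ref{sim:control} the resulting piecewise-constant controls have $O(N)$ switches.
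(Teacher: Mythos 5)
Your proposal is structurally different from the paper's. The paper reduces the problem component by component: it groups $(x^{(k)},p^{(k)})$ into pairs and invokes a two--dimensional lemma (Lemma \ref{A:simcontr}) on each pair, handling the unequal-dimension cases $d\gtrless d_p$ by reallocating leftover coordinates. You instead work directly in the full $\mathbb{R}^d\times\mathbb{R}^{d_p}$ phase space with alternating freeze-one-variable phases. Your Phases~1 and 2 are sound: the pulse with $c\neq 0$, $u\equiv 0$ to separate coinciding $x$-components is exactly the kind of preprocessing the paper uses in its Step~1, and the hyperplane-isolation argument for $p$ (valid because $\dot p$ does not depend on $p$ itself) mirrors Step~3 of Lemma \ref{A:simcontr}. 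So for targets with all $y_i$ distinct you have a complete and arguably cleaner argument.

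The gap is your Phase~3, and you correctly identify it yourself. You assert that a per-point bias $\langle c,\varphi_i\rangle$ lets one ``adapt'' the first-order simultaneous control argument so that distinct $\tilde y_i$ can be driven to coinciding $y$-values, but you do not explain how. This is not a minor detail: the whole obstruction that motivates the result (uniqueness of first-order flows prevents merging trajectories) is concentrated precisely here, and replacing a single first-order NODE flow by $N$ flows that differ only by a constant bias does not obviously produce the required map. The paper removes this difficulty at the source rather than in a third phase: it observes that backward trajectories of \eqref{CRNN} are forward trajectories of the same system (replace $w,u$ by $-w,-u$), so it evolves the distinct target pairs $(y_i,\varphi_i)$ backward for a short time, separating coinciding $y$-values using a hyperplane in the $p$-variable (possible because $\varphi_i\neq\varphi_j$ whenever $y_i=y_j$), and obtains intermediate targets with distinct state components. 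The forward control problem is then of the ``nice'' type, and the final short backward-turned-forward segment glues the two. That same time-reversal trick plugs directly into your framework: run it once at the start to manufacture distinct $\tilde y_i$, keep your Phases~1--2 unchanged, and delete Phase~3 entirely. As written, however, your argument does not close, because the adaptation in Phase~3 is the missing lemma, not an iteration of known tools.
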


\begin{remark}[Approximate controllability]
 We have assumed that the targets are distinct. If two targets coincide (in state and memory), we cannot drive two points exactly there by the uniqueness of solution of the ODE. However, it is certainly possible to obtain an approximate controllability result, since we can control both points arbitrarily close to the common target by changing one of the targets.

\end{remark}

\begin{remark}[Complexity of the controls]
 The proof is constructive and inductive using piecewise constant controls. Therefore, the maximum required number of switches is of the order of $N$.
\end{remark}

\begin{remark}[Other activation functions]
One will notice that the only thing used from the activation function is that $\sigma$ is globally Lipschitz, $\sigma(x)=0$ if $x\leq 0$ and $\sigma(x)>0$ if $x>0$.
\end{remark}

The proof of Proposition \ref{memorysimcontr} makes use of an extension of the techniques developed in \cite{ruizbalet2021neural} for the control of 
\begin{equation}\label{NerRES}
 \dot{x}=w(t)\sigma\left(\langle a(t),x(t)\rangle+b(t)\right).
\end{equation}
In \cite{ruizbalet2021neural}, using Cartesian flows, the authors can prove a simultaneous control result for any $d\geq 2$. 

In this proof, we have two aspects to deal with. Firstly, notice that the field of the memory variable only depends on the state component, so, even if similar mechanisms to the case of first order the Neural ODE \eqref{NODE} can be applied, the intrinsic limitation of only depending on the state components should be discussed. Secondly, the dimensions $d$ and $d_p$ might not coincide. It would be certainly simpler to consider $d=d_p$, but as we shall see later on, the dimension of the memory plays a crucial role in the approximate simultaneous tracking controllability. 

Let us briefly remind some key features of the control of the Neural ODE for $d=2$ shown in \cite{ruizbalet2021neural}. The dynamics is
$$\begin{pmatrix}x^{(1)}\\ x^{(2)} \end{pmatrix}'
=\begin{pmatrix} w_1(t)\\ w_2(t)\end{pmatrix}\sigma(\langle a(t), x\rangle+b(t))
.$$
By using flows of these two types (see Figure \ref{previousflows}),
\begin{equation}\label{prev1}\begin{pmatrix}x^{(1)}\\ x^{(2)} \end{pmatrix}'
=\begin{pmatrix} w_1\\ 0\end{pmatrix}\sigma(a_2x^{(2)}+b)
\end{equation}
and
\begin{equation}\label{prev2}\begin{pmatrix}x^{(1)}\\ x^{(2)} \end{pmatrix}'
=\begin{pmatrix} 0\\ w_2\end{pmatrix}\sigma(a_1x^{(1)}+b)
\end{equation}
one can prove the approximate simultaneous controllability (see also Section \ref{ss:mainres} for Momentum ResNets).
As one can see from Figure \ref{previousflows}, the flows associated to \eqref{prev1} and \eqref{prev2} freeze one side of the hyperplane, selected by $\langle a,x\rangle +b=0$, and cause a parallel movement in the nonfrozen half space.

The main feature is that, in the equations in \eqref{prev1} and \eqref{prev2}, the  evolution of one component, say $x^{(1)}$ does not depend on $x^{(1)}$ itself, but on the other components. For this reason, even with the considered system  \eqref{CRNN}, the evolution of the memory component does not depend on itself, so we can still have a simultaneous controllability result. In the Appendix \ref{appsimcontr}, we will provide a proof of the simultaneous control result for the Neural ODE.

\begin{figure}
    \centering
    \includegraphics[scale=.27]{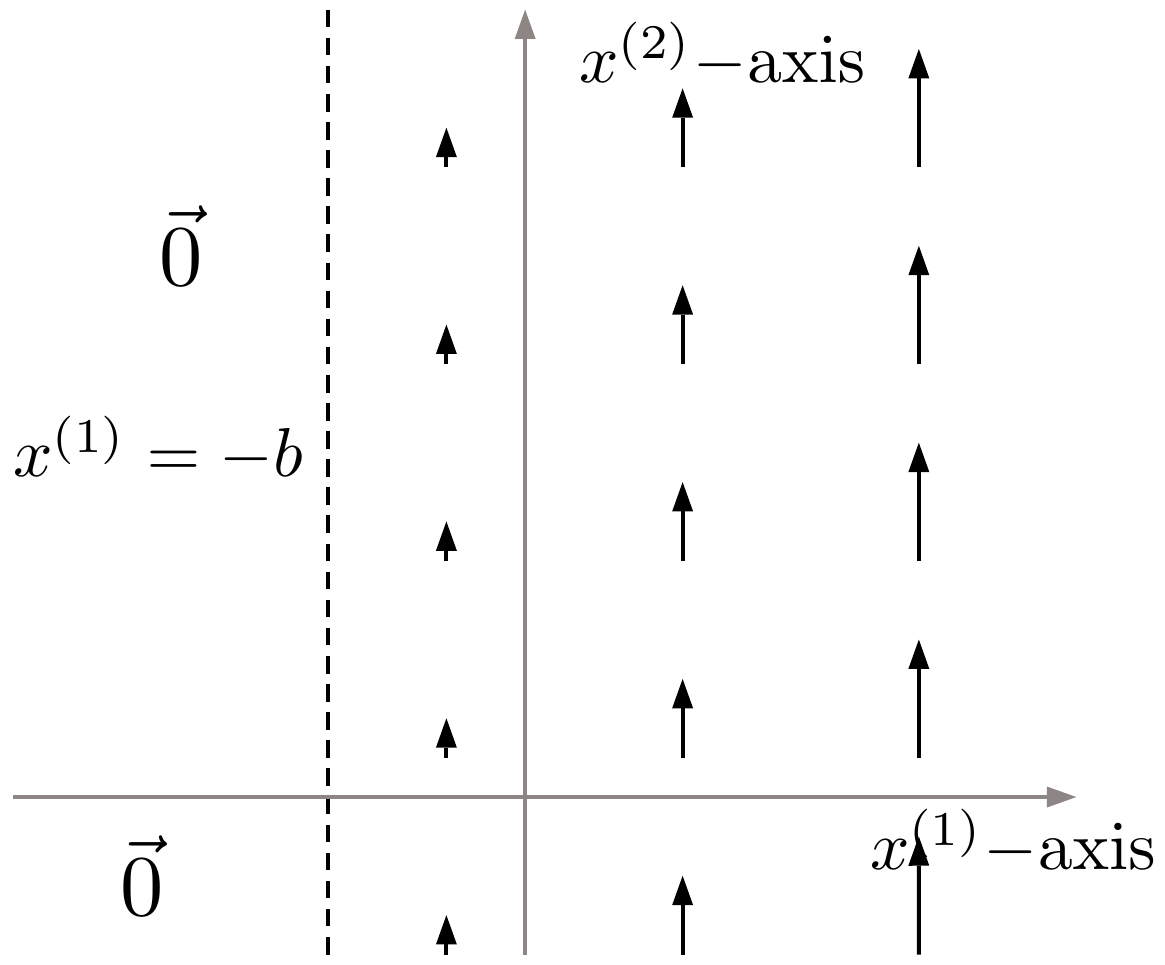}
        \includegraphics[scale=.27]{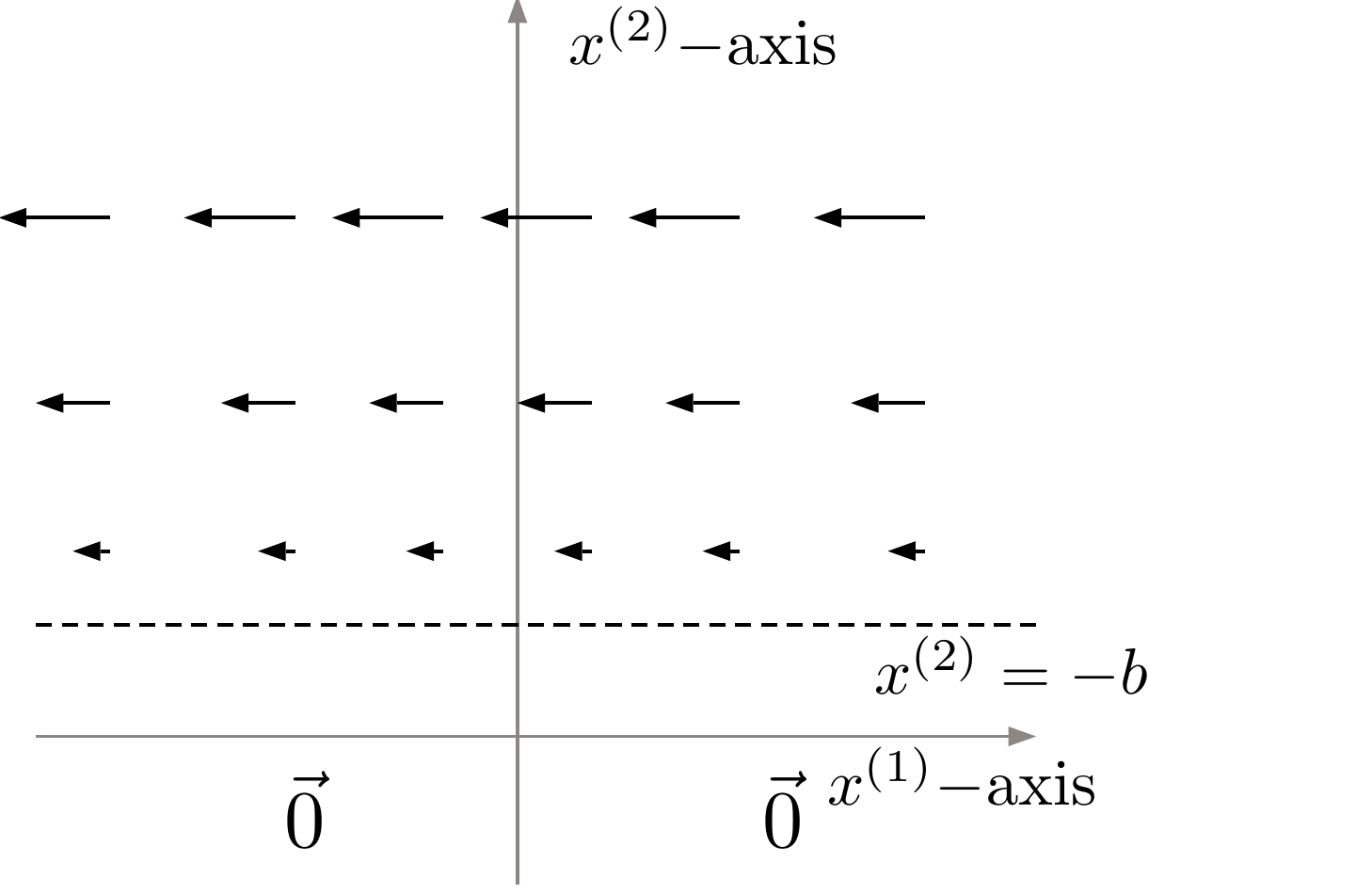}
    \caption{ {\footnotesize (Left) flow associated to \eqref{prev2}, (Right) flow associated to \eqref{prev1}}}
    \label{previousflows}
\end{figure}

The proof is postponed into the Appendix \ref{ApC}


\subsection{Simultaneous tracking control}
The goal of this section is to show how by the memory can help to obtain a simultaneous tracking controllability property.

We will consider a finite collection of samples of a continuous function $M$, 
$$M:\Omega\subset \mathbb{R}^d\mapsto C([0,T];\mathbb{R}^d)\cap BV([0,T];\mathbb{R}^d).$$
That is, we have a dataset of the form
{\small
$$\left\{(x_i,y_i=M(x_i) \right\}_{i=1}^N\subset \mathbb{R}^d\times C([0,T];\mathbb{R}^d)\cap BV([0,T];\mathbb{R}^d).$$
}
For being able to approximate the map $M$, we will consider an extension of \eqref{CRNN}. Instead of considering a scalar product between two vectors $\langle a,b\rangle$ or $\langle c,p\rangle$ in the state equation, we will consider a matrix vector product. 
Precisely, we take
\begin{equation}\label{CRNNE}
  \begin{cases}
   \dot{x}=W\boldsymbol{\vec{\sigma}}\left(Ax  +Cp +b_1\right)+b_2,\\
   \dot{p}=u\sigma\left(\langle d,x\rangle +f\right),\\
   x(-\tau)=x_i,\quad p(-\tau)=0.
  \end{cases}
\end{equation}
where $b_1,b_2\in L^\infty((-\tau,T);\mathbb{R}^{d}) $, \newline $W,A\in L^\infty((-\tau,T);\mathbb{R}^{d\times d})$,  $C\in L^\infty((-\tau,T);\mathbb{R}^{d\times d_p}) $. Note that the memory component of the system is the same as in \eqref{CRNN}. 
Again, notice that \eqref{CRNNE} can be rewritten as:
\begin{equation*}
   \dot{x}=W\boldsymbol{\vec{\sigma}}\left(Ax  +C\int_{-\tau}^tu\sigma\left(\langle d,x\rangle +f\right)ds +b_1\right)+b_2.\\
\end{equation*}

On the other hand, for suitable controls, the system \eqref{CRNNE} has the same flows as \eqref{CRNN}. In particular, Theorem \ref{memorysimcontr} also holds for \eqref{CRNNE} (and also the universal approximation of \cite{ruizbalet2021neural}).

The following theorem is the simultaneous tracking controllability, that we are able to achieve by taking the dimension of the memory as, at least, the double of the state dimension.

\begin{theorem}[Simultaneous Tracking controllability]\label{trackRNN}
Let  $d_p\geq d$ and let $\{(x_i,y_i)\}_{i=1}^N\subset \mathbb{R}^d\times \left(BV((0,T);\mathbb{R}^d)\cap C^0((0,T);\mathbb{R}^d)\right)$, and fix $\tau>0$. Then, for every $\epsilon>0$, there exist controls $W$, $A\in L^\infty((-\tau,T);\mathbb{R}^{d\times d})$, $C\in L^\infty((-\tau,T);\mathbb{R}^{d\times d_p}) $ and $d,b_1,b_2\in L^\infty((-\tau,T);\mathbb{R}^{d}) $, $u\in L^\infty((-\tau,T);\mathbb{R}^{d_p})  $ and $f\in L^\infty((-\tau,T);\mathbb{R})$ such that the solution of \eqref{CRNNE} satisfies:
 $$ \sup_{0\leq t\leq T}|\phi_t((x_i,0);\omega)-y_i(t)|<\epsilon \qquad \forall i\in\{1,...,N\},$$
 with
 $\omega=\{W,A,C,d,b_1,b_2,u,f\}$.
 \end{theorem}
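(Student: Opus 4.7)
The strategy combines a piecewise-constant approximation of the target trajectories with iterated applications of Proposition~\ref{memorysimcontr} on a fine time partition.

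Since each $y_i$ is continuous on the compact interval $[0,T]$, it is uniformly continuous. Given $\epsilon>0$, choose a partition $0=t_0<t_1<\cdots<t_K=T$ fine enough that $|y_i(t)-y_i(t_{k-1})|<\epsilon/4$ for every $i$, $k$ and $t\in[t_{k-1},t_k]$, and set waypoints $y_i^k:=y_i(t_k)$. On the preliminary interval $[-\tau,0]$, apply Proposition~\ref{memorysimcontr} (which transfers to \eqref{CRNNE} since \eqref{CRNNE} reduces to \eqref{CRNN} for an appropriate diagonal choice of the matrix controls) to drive each pair $(x_i,0)$ into $(y_i^0,p_i^{[0]})$ with pairwise distinct memory tags $p_i^{[0]}\in\mathbb{R}^{d_p}$; the hypothesis $d_p\geq d$ leaves enough dimensions to accommodate such tags.

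Then iterate over $k=1,\dots,K$: split each $[t_{k-1},t_k]$ into a short transition window $[t_{k-1},t_{k-1}+\delta]$ and a hold window $[t_{k-1}+\delta,t_k]$. On the transition window invoke Proposition~\ref{memorysimcontr} once again to steer $(x_i,p_i)$ from $(y_i^{k-1},p_i^{[k-1]})$ to $(y_i^k,p_i^{[k]})$, keeping the new tags pairwise distinct. On the hold window freeze the dynamics by setting $W\equiv 0$, $b_2\equiv 0$ (so $\dot{x}=0$) and $u\equiv 0$ (so $\dot{p}=0$); the pair $(x_i,p_i)$ then stays at $(y_i^k,p_i^{[k]})$, within $\epsilon/4$ of $y_i(t)$ by the choice of partition. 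Concatenating the pieces gives a candidate $\omega$ on $[-\tau,T]$, and on each transition window the bound $|x_i(t)-y_i(t)|\leq|x_i(t)-y_i^{k-1}|+\epsilon/4$ reduces the uniform tracking error to the state excursion during each steering.

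\emph{Main obstacle.} This excursion bound is the technical heart of the argument: Proposition~\ref{memorysimcontr} only yields endpoint controllability, whereas here we need a trajectory-wise estimate ensuring that the intermediate state $x_i(t)$ stays within $O(\epsilon)$ of the segment joining $y_i^{k-1}$ and $y_i^k$. Closing this gap forces one to revisit the constructive proof of Proposition~\ref{memorysimcontr} (based on piecewise-constant, Cartesian-type flows) and verify that, when the starting and ending configurations are close together, the steering can be realized by transitions whose state excursion is of the same order as the endpoint displacement, independently of $\delta$. The condition $d_p\geq d$ is what provides the additional free memory dimensions required to execute such tightly controlled transitions---for instance by devoting an auxiliary block of memory to route each trajectory along an approximately linear path---while preserving the pairwise distinctness demanded at every intermediate step. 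Once such a uniform excursion estimate is secured, choosing $K$ large and $\delta$ small delivers the desired $\epsilon$-tracking.
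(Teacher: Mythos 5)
There is a genuine gap, which you yourself identify, and the paper's proof avoids it by taking a fundamentally different route. Your hold-and-transition plan needs a trajectory-wise excursion estimate for the steering produced by Proposition~\ref{memorysimcontr}, but that proposition only gives endpoint control: between $t_{k-1}$ and $t_{k-1}+\delta$ the state could wander far from the segment joining $y_i^{k-1}$ and $y_i^k$ before arriving, and nothing in the statement of the proposition rules this out. Closing that gap would require reopening the constructive proof of the proposition and verifying, flow by flow, that the piecewise-constant Cartesian controls can be chosen so that the state excursion is comparable to the endpoint displacement uniformly in $\delta$; this is nontrivial and is not carried out. Without it, the chain of inequalities in your last display does not yield the claimed $\sup_{0\le t\le T}$ bound.

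The paper sidesteps the excursion problem entirely. Rather than steering $x$ from waypoint to waypoint and hoping it stays close in between, it forces the state trajectory to be \emph{exactly} the piecewise-linear approximant on each interval $I_k$. Concretely, it sets $A=0$, $b_1=0$, $W=I_d$, $C=(I_d\mid 0)$, and arranges one block of the memory, $p^{(1)}$, so that $p^{(1)}_{i,k}=P_{i,k}-b_{2,k}\in\mathbb{R}^d_{+}$; then the ReLU is active and $\dot x = p^{(1)}_{i,k}+b_{2,k}=P_{i,k}$, i.e.\ the state moves at the prescribed slope, with no excursion to bound. All of the ``steering'' happens in the memory, which is possible because $\dot p$ depends only on $x$: while $p^{(1)}$ is locked in (driving the state), the controls $u,d,f$ are free to reconfigure the other block $p^{(2)}$ to the values needed on $I_{k+1}$, and on the next interval the roles of $p^{(1)}$ and $p^{(2)}$ are swapped. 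This alternating two-block strategy is precisely why the paper's proof uses $d_p=2d$ (and the corollary requires $d_p\geq 2d$), whereas your proposal tries to get by with $d_p\geq d$ for tagging only. The partition is also chosen so that the first components of the piecewise-linear approximants have pairwise distinct slopes, guaranteeing a subinterval $\tilde I_k$ where the states can be separated by a hyperplane in $x^{(1)}$ and the memory reconfiguration can be done point by point. In short: your approach is a different decomposition (hold windows plus repeated endpoint control) that would, if completable, potentially prove the theorem with less memory, but it leaves the central estimate unproved; the paper's alternating-memory construction replaces that estimate with an exact ODE identity, at the cost of doubling the memory dimension.
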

In the interval $(-\tau,0)$, we use the controls to prepare the memory variables before the simultaneous tracking control process. We will postpone the proof in the Appendix \ref{ApC2}, giving only some flavour here.
 
 The proof of the theorem is based on the following two observations. We will consider $d_p=2d$; from here on, we will make an abuse of notation by writing $p=(p^{(1)},p^{(2)})$ where $p^{(1)},p^{(2)}\in\mathbb{R}^d$.
 
 \begin{enumerate}[leftmargin=5mm]
     \item We begin by approximating linear maps of the form $y_i=P_it+B_i$ with $P_i$, $B_i\in \mathbb{R}^d$. 
     For every $i\in\{1,...,N\}$ let $B_i$ be the targets for the state for Theorem \ref{memorysimcontr} and $p_i$ the targets for the memory to be found hereafter.
     Then, consider $A=0$, $b_1=0$, $W=I_d$; the dynamics of the state equation is
     $$x'=\boldsymbol{\vec{\sigma}}(Cp)+b_2.$$
     We choose $C=(I_d|0)$; 
     since we consider $\sigma$ to be the ReLU, we reduce the problem to finding $b_2\in \mathbb{R}^d$ such that
     $$p_i^{(1)}=P_i-b_2\in \mathbb{R}_{+}^d\qquad i\in \{1,...,N\},$$
     which is always possible. Then the dynamics reads
          $$ x'=P_i,\quad x(0)=B_i.$$
          The solutions to the latter are precisely $P_it+B_i$ for all $i\in\{1,...,N\}$.
    \item The second point is to use the first and second component of the memory in an alternate manner. We will first approximate the target functions by means of piecewise linear functions of time, as Figure \ref{fig:piecwise} shows. Then, we will use an alternate strategy, while we are using the first component of the memory to control the system, we will reconfigure the second component to be prepared for the next interval as Figure \ref{fig:memdiag}.
    
    \begin{figure}
        \centering
        \includegraphics[scale=0.4]{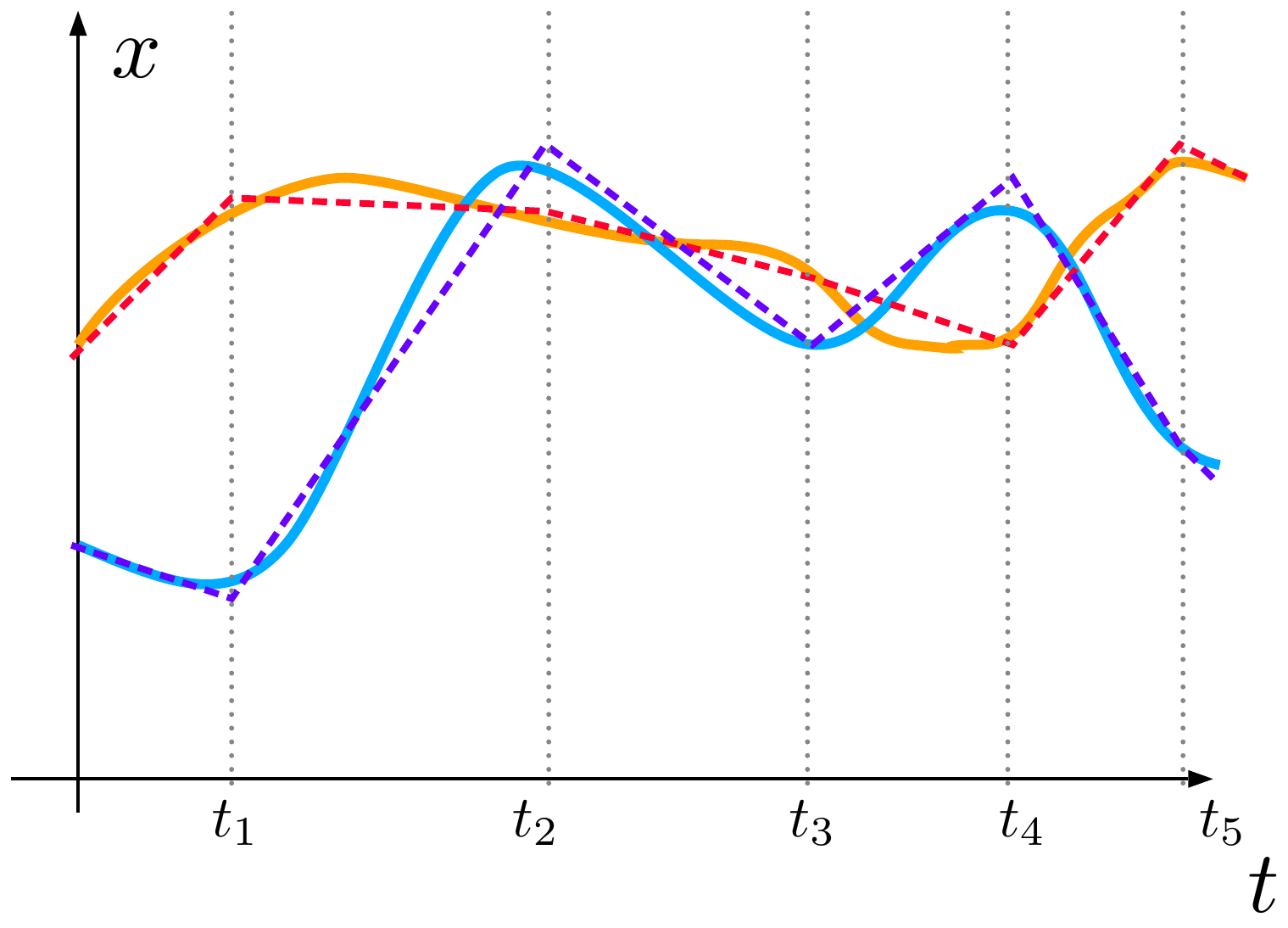}
        \caption{ {\footnotesize Piecewise linear approximation of the target functions. The solid lines represent the target functions, while the dotted coloured lines represent its piecewise linear approximation. The approximation generates time intervals $(t_k,t_{k+1})$ where the memory will be used to control the trajectory. }}
        \label{fig:piecwise}
    \end{figure}
    
        \begin{figure}
        \centering
        \includegraphics[scale=0.35]{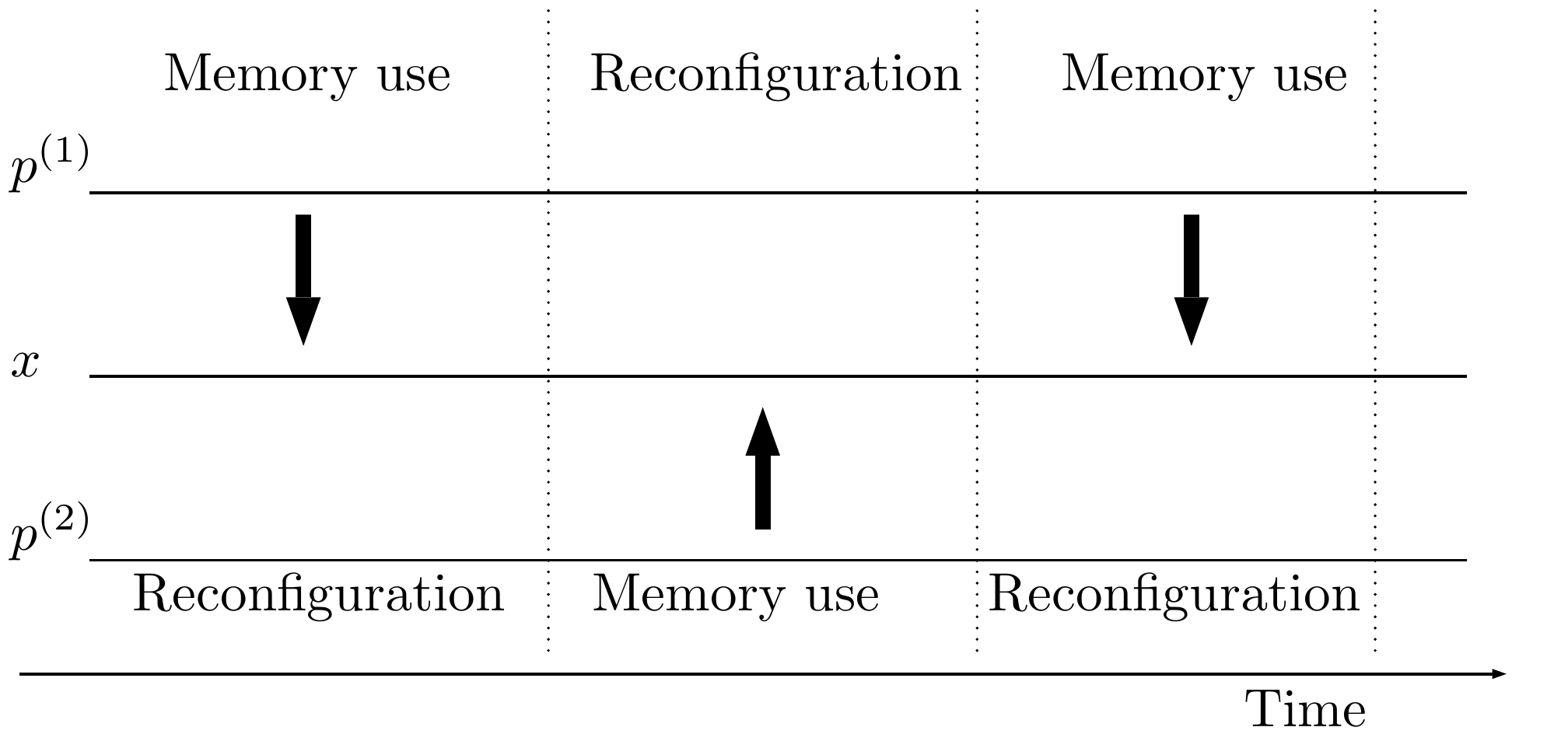}
        \caption{ {\footnotesize Qualitative representation of the alternate control strategy, while we are using $p^{(1)}$ to approximate in a time interval (Memory use), we are preparing $p^{(2)}$ for the next time interval (Reconfiguration). In the successive time interval the roles are interchanged, we use $p^{(2)}$ for controlling while $p^{(1)}$ is being prepared for the next time interval.}}
        \label{fig:memdiag}
    \end{figure}

 \end{enumerate}
 
\begin{remark}[Other activation functions]
In the proof of Theorem \ref{trackRNN} we have strongly used the structure of the ReLU activation function. However, similar constructions can be done for activation functions that satisfy
\begin{align*}
\sigma(x)=0\qquad x\leq0,\quad
\sigma(x)>0\qquad x>0
\end{align*}
with $\sigma$ being differentiable from the right, i.e.
$$\exists \lim_{h\to 0^+} \frac{\sigma(h)}{h}=:\sigma'_+(0).$$
Roughly speaking, the property that one can use is that around $0$ the activation function is like the ReLU
\begin{equation*}
    \sigma(x)\approx \max\{0,\sigma'_+(0)x\} \quad \text{if } |x|<<1 .
\end{equation*}
The proof is adapted by simply choosing small target memories in the positive cone, i.e.  $p_i\in \mathbb{R}^d_+\cap \mathbb{B}_\delta(0)$ for $\delta>0$ small enough and by choosing $W=\alpha I_d$  for $\alpha\in\mathbb{R}$ big enough so that the memory component that needs to be used, say $p_{i,k}^{(1)}$, fulfills
$$\alpha p_{i,k}^{(1)}=P_{i,k}-b_{2,k}$$
where $P_{i,k}-b_{2,k}$ are as in the proof of Theorem \ref{trackRNN}.
\end{remark}

\begin{corollary}[Universal Tracking Approximation]\label{cor:uta}
Fix $d_p\geq 2d$. Let $\Omega\subset \mathbb{R}^d$ be a bounded set and  $M\in C^0\left(\Omega; C^0((0,T);\mathbb{R}^d)\cap BV((0,T);\mathbb{R}^d)\right)$. Fix $\tau>0$, then for every $\epsilon>0$ there exist controls $\omega$ such that the solution of \eqref{CRNNE} satisfies:
$$ \sup_{t\in(0,T)}\left\|M(\cdot,t)-P_x\phi_t(\cdot;\omega)\right\|_{L^2(\Omega)}<\epsilon, $$
where $P_x$ is the projection into the state variables.
\end{corollary}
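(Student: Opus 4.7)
I would deduce Corollary \ref{cor:uta} from Theorem \ref{trackRNN} by a mesh--compression--continuity argument, entirely parallel to the way Corollary \ref{thm:ua} is obtained from Theorem \ref{sim:control}. The only new ingredient compared with the universal-approximation case is that the finite collection of ``targets'' is now a finite collection of target curves $y_k(\cdot)\in C^0([0,T];\R^d)\cap BV([0,T];\R^d)$, and the simultaneous control that must be invoked is the tracking version.

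First, I would cover $\Omega$ by finitely many hyperrectangles $R_1,\ldots,R_N$ with representatives $x_k\in R_k$. Since $M$ is continuous from $\Omega$ into the space $C^0([0,T];\R^d)\cap BV([0,T];\R^d)$ and $\Omega$ is bounded, a standard uniform-continuity argument (on a compact exhaustion of $\Omega$) lets us refine the mesh so that
$$\sup_{t\in(0,T)}\|M(x,t)-M(x_k,t)\|<\frac{\epsilon}{3|\Omega|^{1/2}}\qquad\forall\, x\in R_k.$$
I then set $y_k(t):=M(x_k,t)$, which serve as the target trajectories for the $N$ representatives.

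Next, I would split the available time window into a compression phase followed by a tracking phase. In the compression phase I would take $C\equiv 0$ and $u\equiv 0$, which decouples the memory and keeps $p\equiv 0$, and then apply the compressive flows of the underlying first-order NODE (the same mechanisms used in Corollary \ref{thm:ua} and in \cite{ruizbalet2021neural}) to contract each $R_k$ into a ball $B_\delta(x_k)$ around its representative. In the tracking phase I would invoke Theorem \ref{trackRNN} on the $N$ initial data $\{(x_k,0)\}$ with target trajectories $\{y_k\}$ and tolerance $\epsilon/(3|\Omega|^{1/2})$, producing controls $\omega$ such that each representative's state is within that tolerance of $y_k(t)$ uniformly in $t\in[0,T]$. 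With these controls fixed, the vector field of \eqref{CRNNE} is globally Lipschitz in $(x,p)$ with some constant $L=L(\omega,T)$, so a Gr\"onwall estimate yields
$$\|P_x\phi_t(\tilde x;\omega)-P_x\phi_t(x_k;\omega)\|\le \delta\,e^{LT}\qquad\forall\,\tilde x\in B_\delta(x_k),\ t\in[0,T].$$

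Choosing $\delta$ last, small enough that $\delta e^{LT}<\epsilon/(3|\Omega|^{1/2})$, and combining the three estimates by the triangle inequality applied pointwise in $x\in R_k$ and integrated over $\Omega$ gives the stated bound. The main obstacle is precisely the chicken-and-egg character of this last step: the Lipschitz constant $L$ depends on the tracking controls, which depend on the mesh, while the compression radius $\delta$ depends on $L$. The argument must therefore be carried out in the order (mesh) $\to$ (tracking controls, hence $L$) $\to$ (compression radius $\delta$), so that the compressive controls in the initial subinterval are tuned last to achieve the $\delta$ dictated by $L$.
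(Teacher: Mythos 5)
Your proposal is correct and follows essentially the same route as the paper, which likewise deduces the corollary by first compressing a fine mesh of hyperrectangles (via the compression mechanism of \cite{ruizbalet2021neural}, with the memory decoupled) and then invoking the simultaneous tracking control of Theorem \ref{trackRNN} together with continuous dependence on the initial data. Your explicit ordering of the parameter choices (mesh, then tracking controls and Lipschitz constant, then compression radius) correctly resolves the dependency that the paper's one-line proof leaves implicit.
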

\begin{proof}
One can observe that, using the compression Lemma 1 of \cite{ruizbalet2021neural} in the universal approximation for the state component, and then the simultaneous tracking control Theorem \ref{trackRNN}, we have a universal tracking approximation for functions in $C^0\left(\Omega; C^0((0,T);\mathbb{R}^d)\cap BV((0,T);\mathbb{R}^d)\right)$ for $\Omega\subset\mathbb{R}^d$ bounded. 
 \end{proof}

\section{Simulations}\label{ss:simus}
This section is devoted to illustrating through simulations the generalization properties discussed above of the models considered.

We will consider the two types of problems studied in the paper
\begin{enumerate}[leftmargin=5mm]
    \item Learning a function;
    \item Simultaneous tracking controllability.
\end{enumerate}

\subsection{Learning a function}
For the first problem, we choose a function $$g:[-1,1]^2\to \mathbb{R},$$ precisely the characteristic function of a circle centred in $(0,0)$. We take $N=100$ random samples of it. Then, we use these samples to train a Neural ODE (see Figures \ref{fig:Ng1} and \ref{fig:Ne1}) and a Momentum ResNet (see Figure \ref{fig:Mg1} and \ref{fig:Me1}) and observe the differences.

We employ a least-squares approach by minimizing
$$J(\{x_i,y_i\}_{i=1}^N)=\frac{1}{N}\sum_{i=1}^N\|Px_i(T;\omega)-y_i\|_{L^2}^2+\beta \|\omega\|_{L^2}^2, $$
where by $\omega$ we understand the controls as well as the projection $P$, i.e. $\omega=(a,w,P,b)$ belongs to $$\mathcal{X}:= (L^\infty((0,T); \mathbb{R}^2))^3\times L^\infty((0,T); \mathbb{R})$$
 $x_i(T;\omega)$ is the solution at time $T$ of either a Momentum ResNet as in \eqref{eq:resnet} with initial datum $(x_i,0)$ 
or a  Neural ODE 
\begin{equation*}
    \begin{cases}
    x'=w(t)\sigma(\langle a,x\rangle +b),\\
    x(0)=x_i.
    \end{cases}
\end{equation*}
For both models, we take a sigmoid activation function.

Being $\mathcal{S}$ a set and $\chi_\mathcal{S}$ the characteristic function of such set, we measure the generalization error by means of the $L^1$ norm, i.e.
$$\text{error}=\|P\phi_T(x;\omega)-\chi_{\mathcal{S}}\|_{L^1}, $$
where with an abuse of notation we denote $P \phi_T$ both the solution of the Neural ODE and the projection to the position of the solution of the Momentum ResNet. 

At the final time,  a suitable hyperplane $H$ separates the training points, determining whether their image is 0 or 1. That is, this hyperplane fix the decision boundary. Since the map 
$$ \phi_T:\Omega\to \mathbb{R}^d$$
is a homeomorphism, we have that $(\phi_T)^{-1}(H)$ is topologically equivalent to a hyperplane. 
In particular, the decision boundary for a first-order NODE cannot have the same topology of the boundary of the circle, which is a closed curve. 
So, the decision boundary touches the border of the square in Figure \ref{fig:Ng1}, misrepresenting some points that are far from the border of the circle.

On the other hand, for Momentum ResNets, the decision boundary is determined by a projection only on the state variable. 
This means that  the decision boundary is not necessarily topologically equivalent to a hyperplane. 
From the simulations, we see that the Momentum ResNet has been able  to capture the topology of the set $\mathcal{S}$ and to perform the task with smaller error.

\begin{figure}
    \centering
    \includegraphics[scale=0.3]{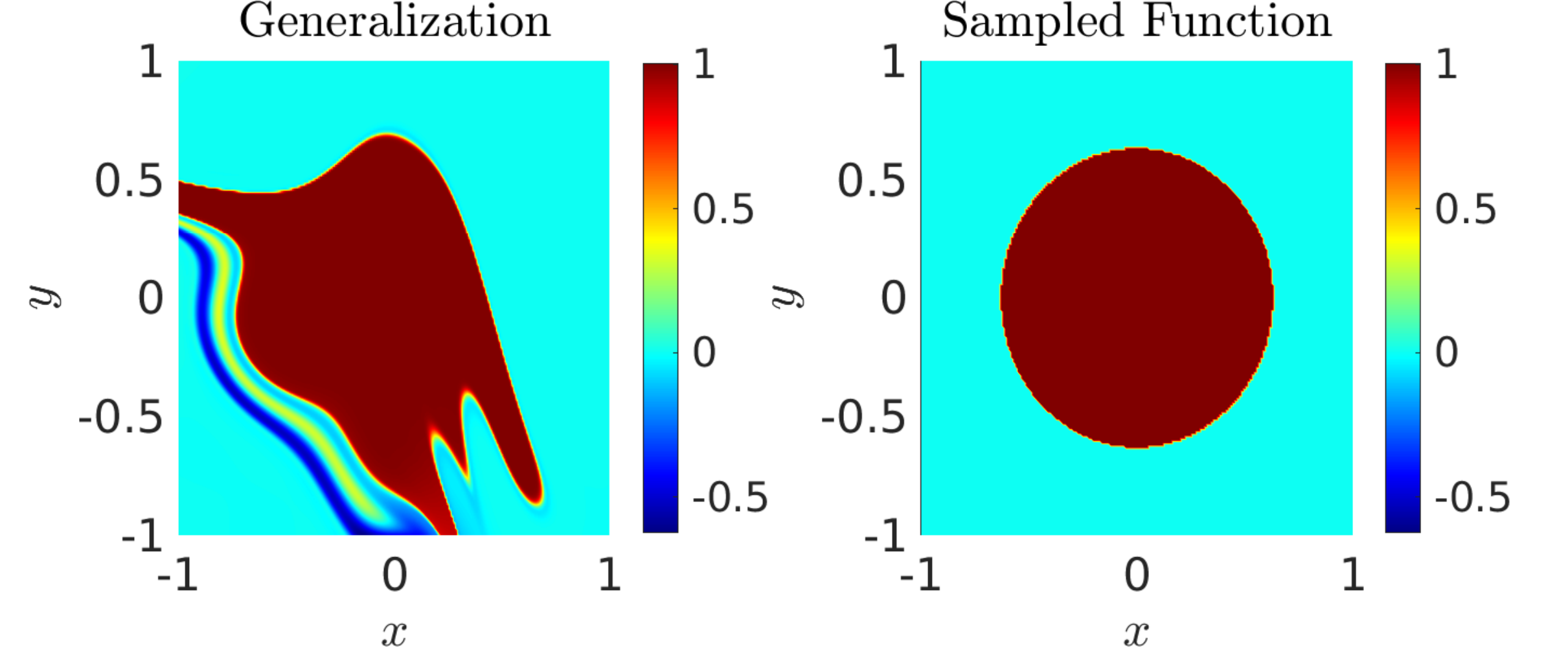}
    \caption{ {\footnotesize Neural ODE with trained with 100 data points sampled in a disk, 25 layers for the time discretization with $T=10$, $\beta=10^{-6}$. }}
    \label{fig:Ng1}
\end{figure}
\begin{figure}
    \centering
    \includegraphics[scale=0.4]{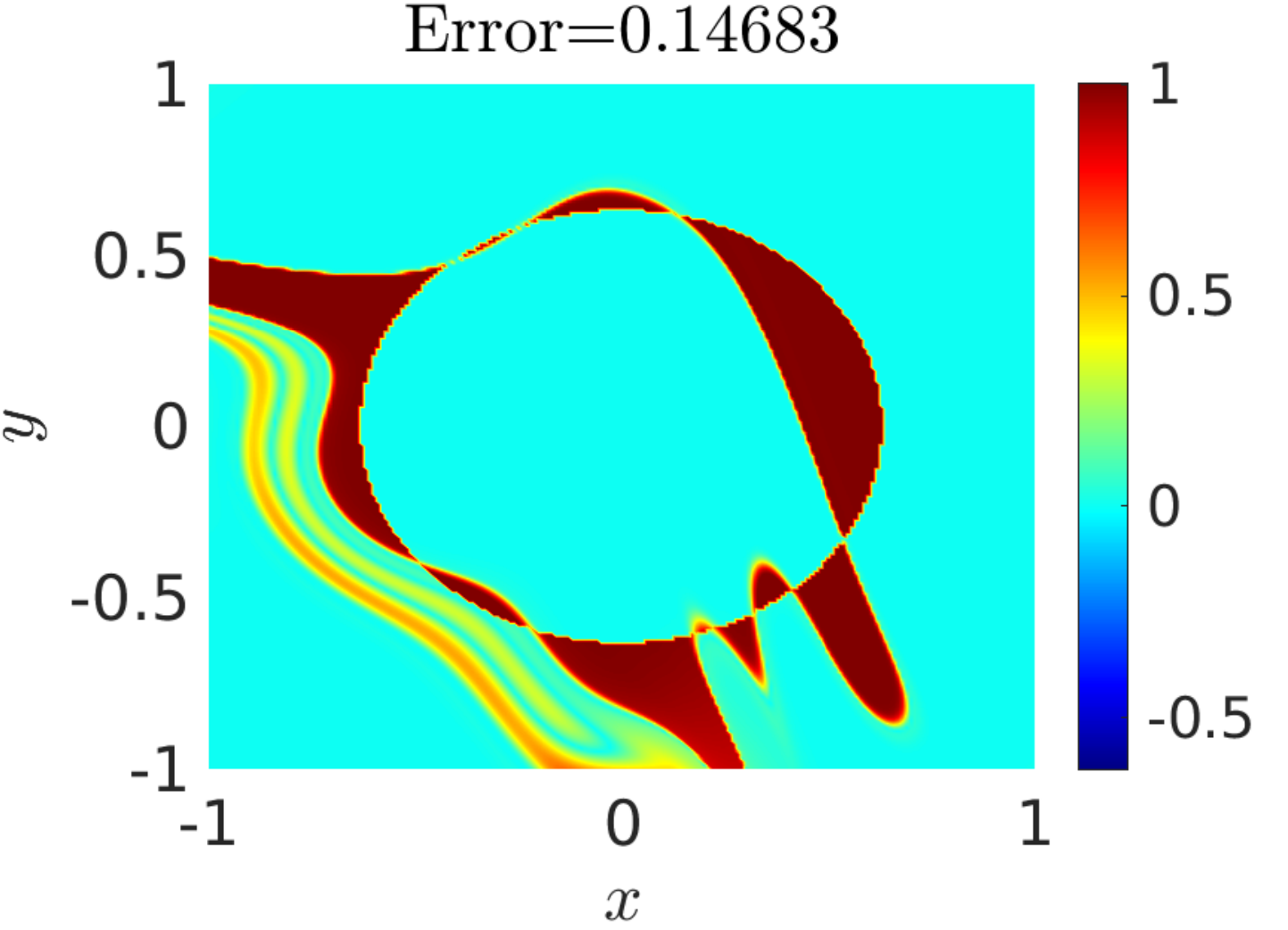}
    \caption{ {\footnotesize Error plot for the Neural ODE with trained with 100 data points sampled in a disk, 25 layers for the time discretization with $T=10$, $\beta=10^{-6}$.}}
    \label{fig:Ne1}
\end{figure}
\begin{figure}
    \centering
    \includegraphics[scale=0.3]{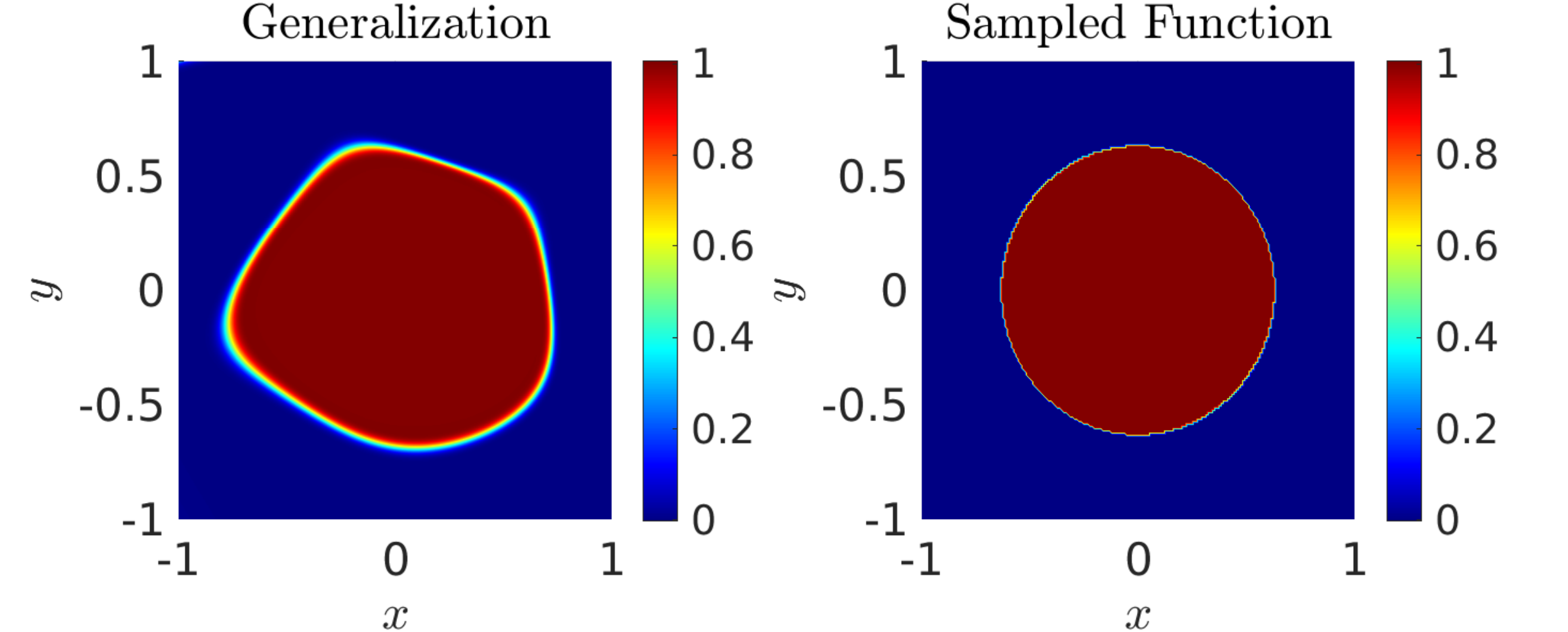}
    \caption{ {\footnotesize Momentum Resnet with trained with 100 data points sampled in a disk, 25 layers for the time discretization with $T=10$, $\beta=10^{-6}$.}}
    \label{fig:Mg1}
\end{figure}
\begin{figure}
    \centering
    \includegraphics[scale=0.4]{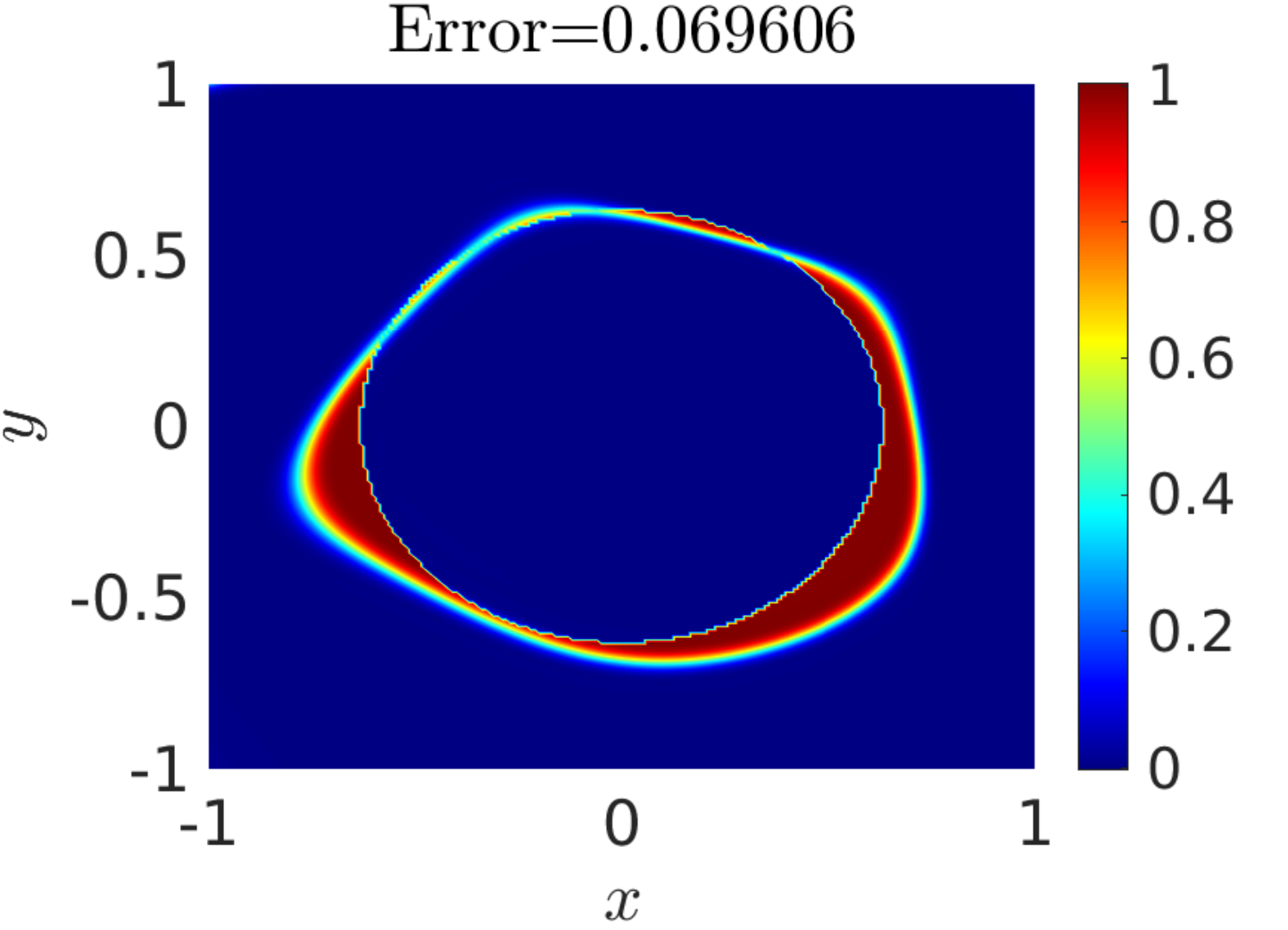}
    \caption{ {\footnotesize Error plot for the Momentum ResNet with 100 data points sampled in a disk, 25 layers for the time discretization with $T=10$, $\beta=10^{-6}$.}}
    \label{fig:Me1}
\end{figure}

\subsection{Simultaneous tracking}
We will consider a function 
$$M\in C^0\left((0,1);C^0((0,T);\mathbb{R})\cap BV((0,T);\mathbb{R})\right),$$
precisely $M(x)=\sin(xt)$. We take a sample of $5$ random points $x_i\in (0,1)$ and define 
$$y_i=M(x_i).$$
We minimize the functional
\begin{equation*}\begin{split}
   J\left(\{x_i,y_i\}_{i=1}^N\right)=\frac{1}{N}&\sum_{i=1}^N\int_0^T\|x_i(t;\omega)-y_i(t)\|_{L^2}^2dt\\
   &+\beta \|\omega\|_{\mathcal{X}}^2, 
  \end{split}
\end{equation*}
where $x_i(t;\omega)$ is the solution of
\begin{equation*}
\begin{cases}
    x'=w\sigma( a x + \left\langle c,\int_{-\tau}^t u\sigma(dx +f)ds\right\rangle +b_1)+b_2\\
    x(-\tau)=x_i,
    \end{cases}
\end{equation*}
with $a,\ w, \ d$, $b_1, \ b_2\in L^\infty((-\tau,T);\mathbb{R})$ and $c, \ u\in L^\infty((-\tau,T);\mathbb{R}^2)$.

We get an error smaller than 0.063529 for the simultaneous tracking and an error of 0.11019 for the overall universal approximation.

\begin{figure}
 \includegraphics[scale=0.25]{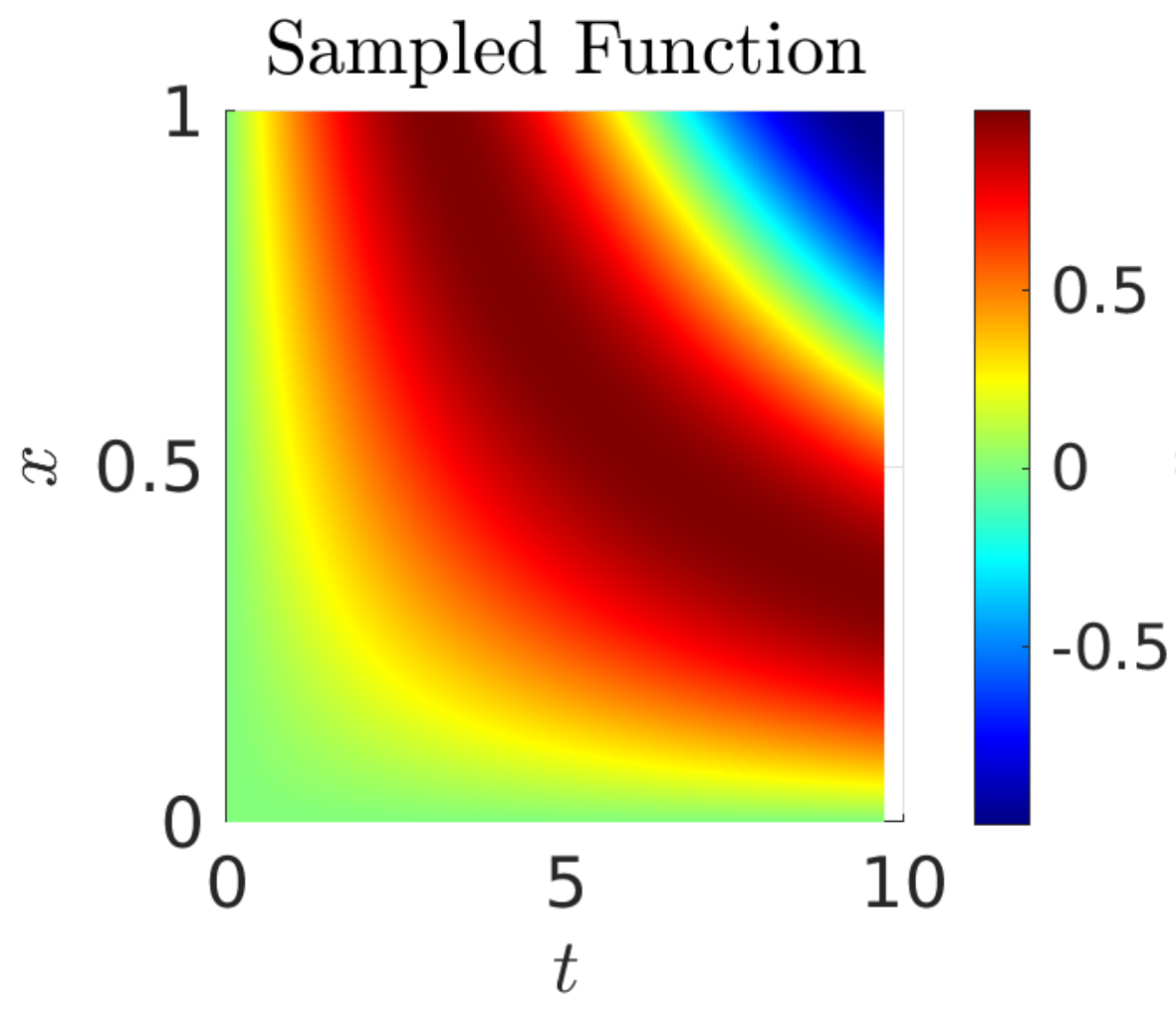}
  \includegraphics[scale=0.25]{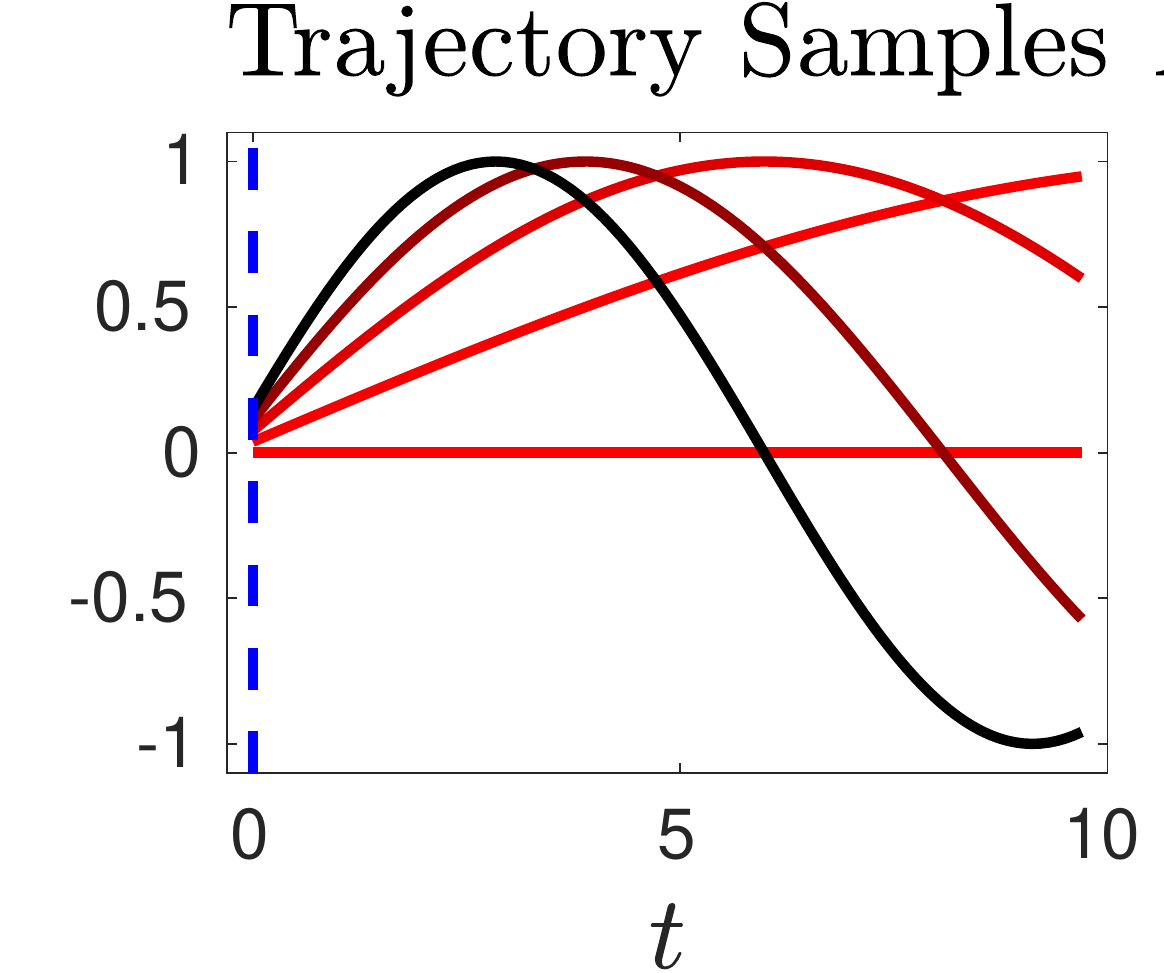}
  
   \includegraphics[scale=0.25]{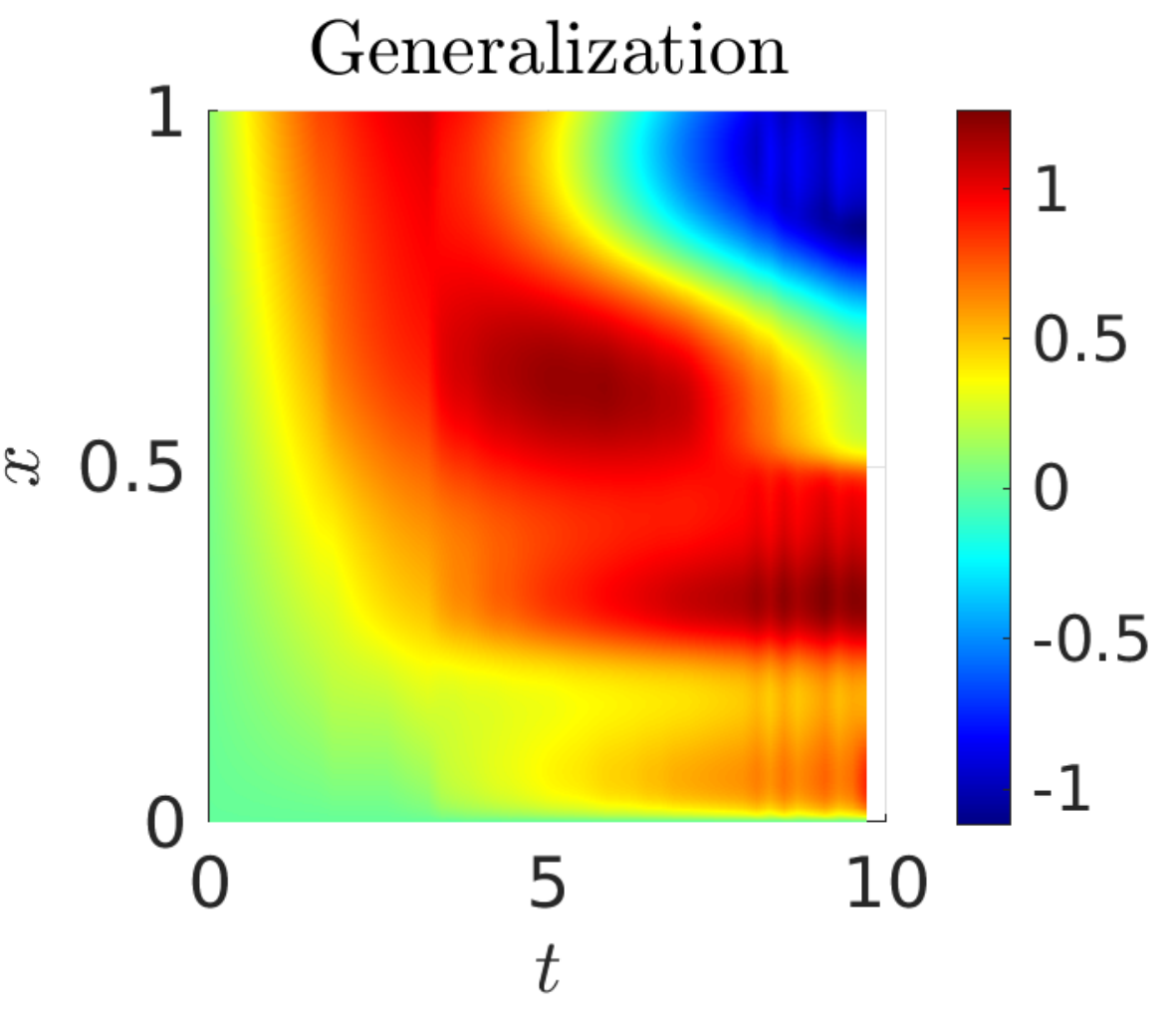}
    \includegraphics[scale=0.25]{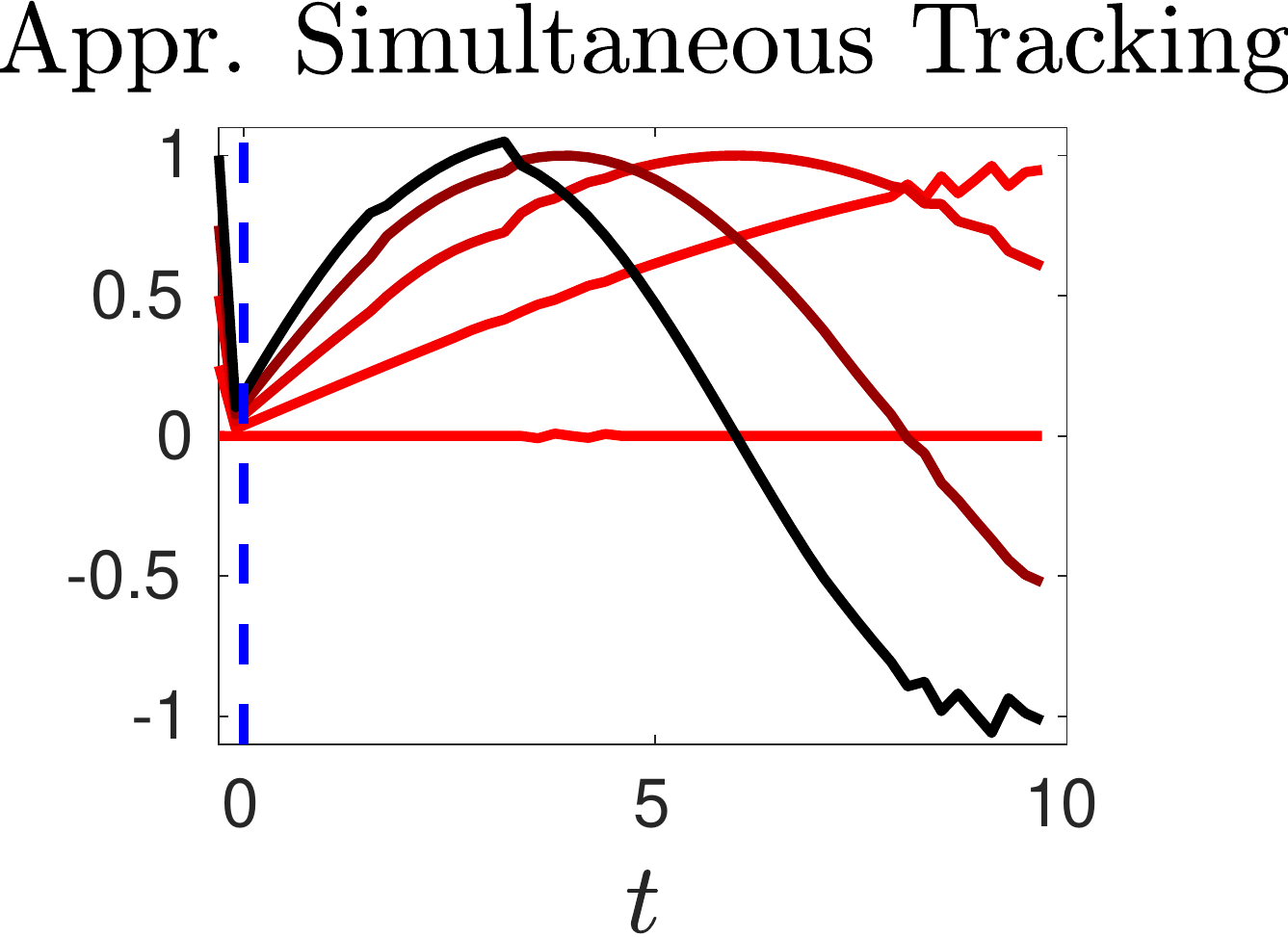}
    
  \includegraphics[scale=0.2]{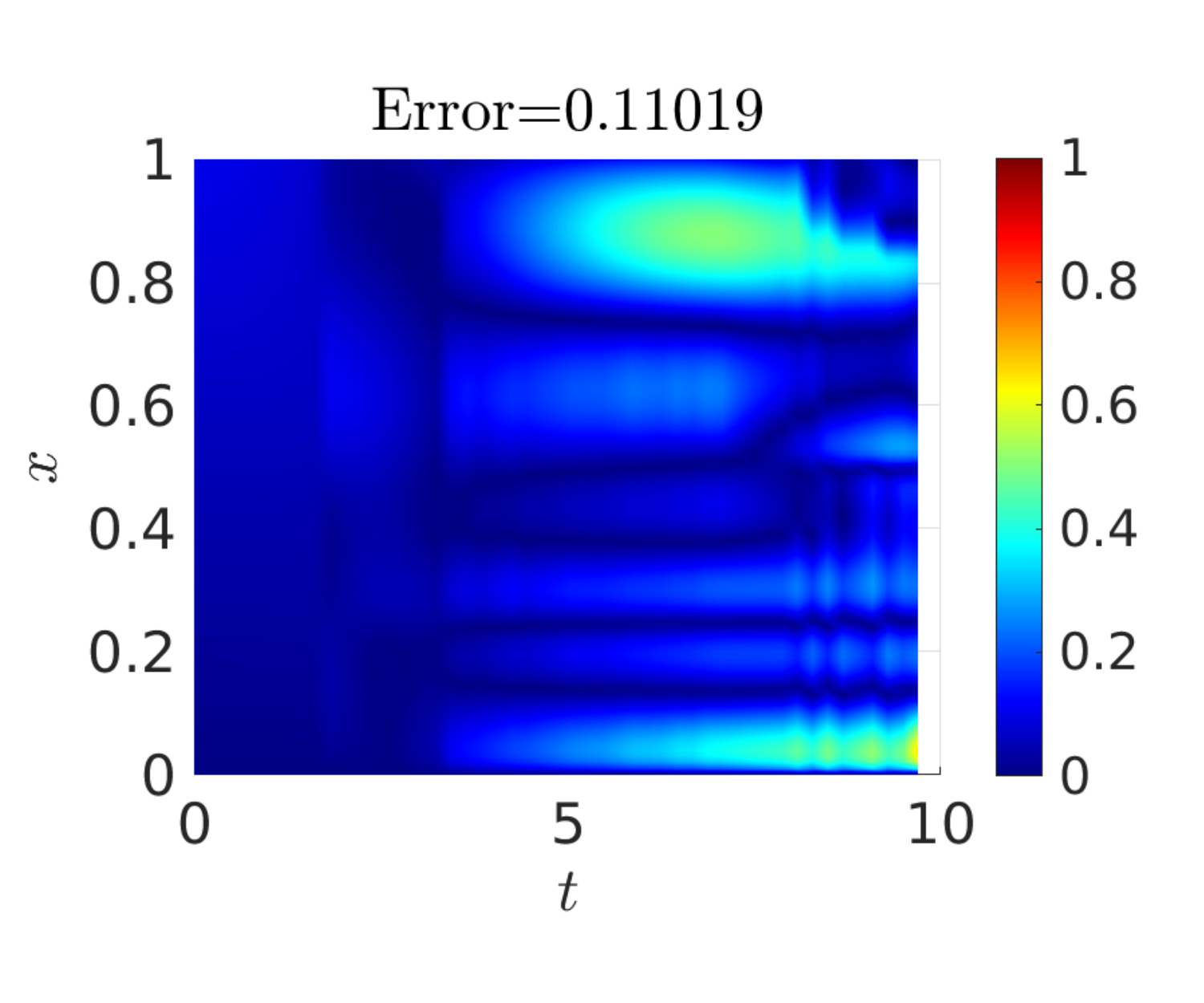}
    \includegraphics[scale=0.2]{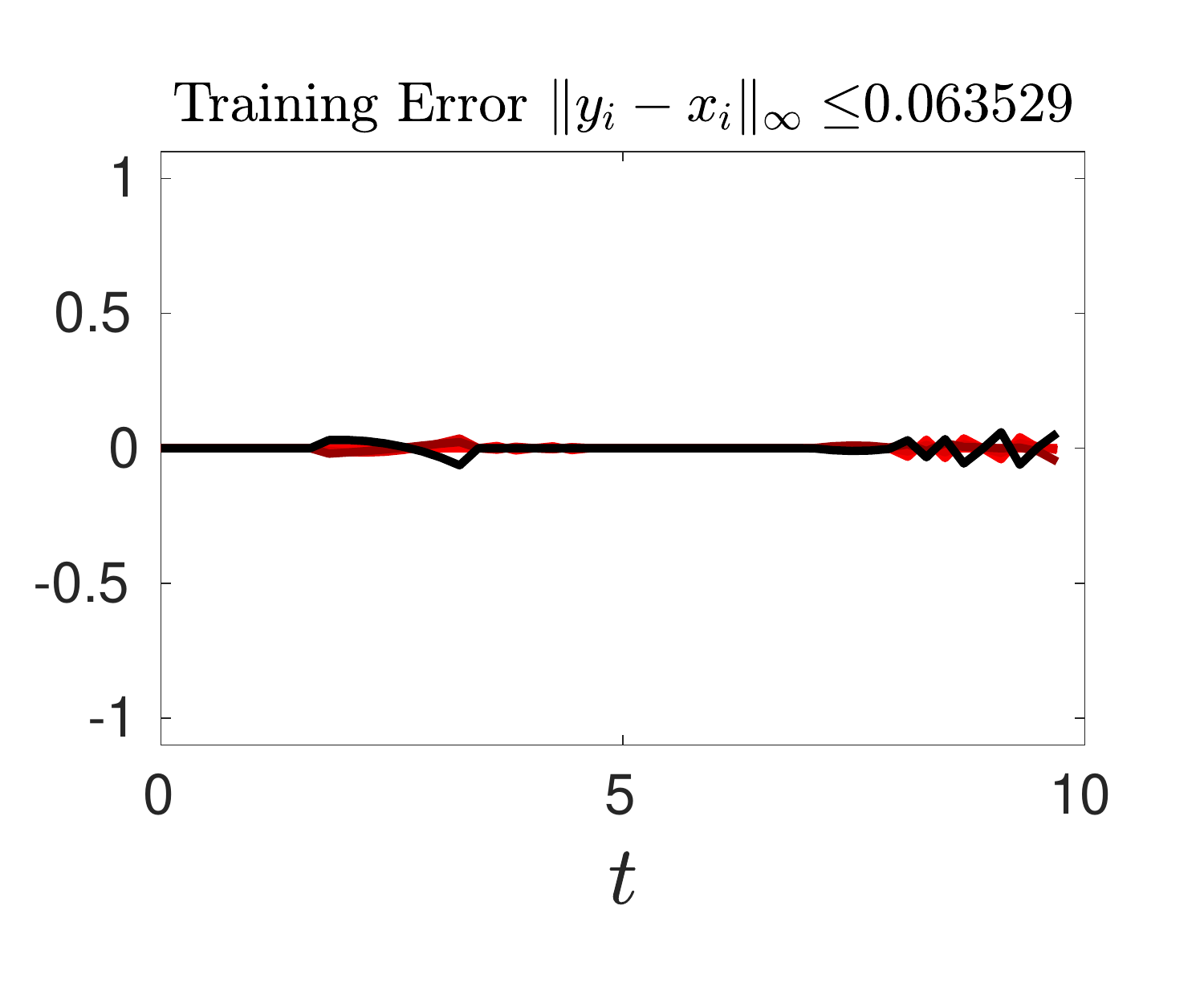}
    
    \caption{ {\footnotesize (Top-Left) Function $M(x)=\sin(0.25xt)$ (Top-Right) 5 equispaced samples in $(0,1)$  of the function $M$. The blue dotted line indicates the time $0$ (Center-Left) Generalization (Center-Right) Interpolation, trajectories in the training data. The blue dotted line indicates the time $0$ (Bottom-Left) Generalization error (Bottom-Right) Training error plot. 50 layers have been considered in the Euler discretization of \eqref{extended}. } }
    \label{fig:sim}
\end{figure}

\section{Conclusion}\label{s:conslusion}
{\color{black}
In contrast to first-order NODEs, Momentum ResNets and Memory NODEs can interpolate exactly any sample of points.
Precisely, given a sample $\{(x_i,y_i)\}_{i=1}^N \subset \R^d\times \R^d$ with $x_i\neq x_j$ for $i\neq j$ and $d, N\in\N$, Momentum ResNets and Memory NODEs can simultaneously control any point $x_i$ to its corresponding target $y_i$ in any given time interval (see our Theorem \ref{sim:control} and Proposition \ref{memorysimcontr}).
For first-order NODEs  as \eqref{NODE}, the analogous result (Theorem 2 in \cite{ruizbalet2021neural}) can be achieved only under the hypothesis $y_i\neq y_j$ for $i\neq j$. 
As explained in the Introduction, the hypothesis of not coinciding targets is necessary for first-order NODEs by uniqueness of the solution.
Instead, for the models investigated in this paper, two trajectories can have the same state variable, provided that they have different velocities or memory states. 

In the simulations (see Section \ref{ss:simus}), we have implemented a binary classification, where, at the final step, each poit is assigned to one of the two class according to a projection on a hyperplan.
Therefore, the decision boundary is a hyperplane after the transformation carried out by the flow of the NODE. Since the input-output map is a homeomorphism, the preimage of the hyperplane, and as a consequence also the decision boundary, is topologically equivalent to a hyperplane. This phenomenon is observed in the simulations. 
On the other hand, when considering a Momentum ResNet, the phase-space dimension is doubled in positions and velocities states, but we only focus on the position variable to determine the class of the point. Because of this property, we observed numerically that Momentum ResNets could have better approximation properties than first-order NODEs.

In Figures \ref{fig:Ng1} and \ref{fig:Mg1}, the task is to approximate the characteristic function of a circle through binary classification using first order NODEs and Momentum ResNets respectively. In Figure \ref{fig:Ng1}, one may observe that the decision boundary is not a closed curve whereas in Figure \ref{fig:Mg1} we can observe that the approximation performed by Momentum ResNets reflects the topology of the circle. 

In fact, when using Momentum ResNets, we assign null velocity to all initial data. The preimage of the final hyperplane is topologially equivalent to a hyperplane in $\R^{2d}$  because we are only observing the intput-output for data in $\mathbb{R}^d\times\{0\}$ (see Figure \ref{fig:decision}).
Hence, the performance of Momentum ResNets not only has presented a smaller overall error, but has been qualitatively preferable to first-order NODEs.

\begin{figure}
    \centering
    \includegraphics[scale=0.6]{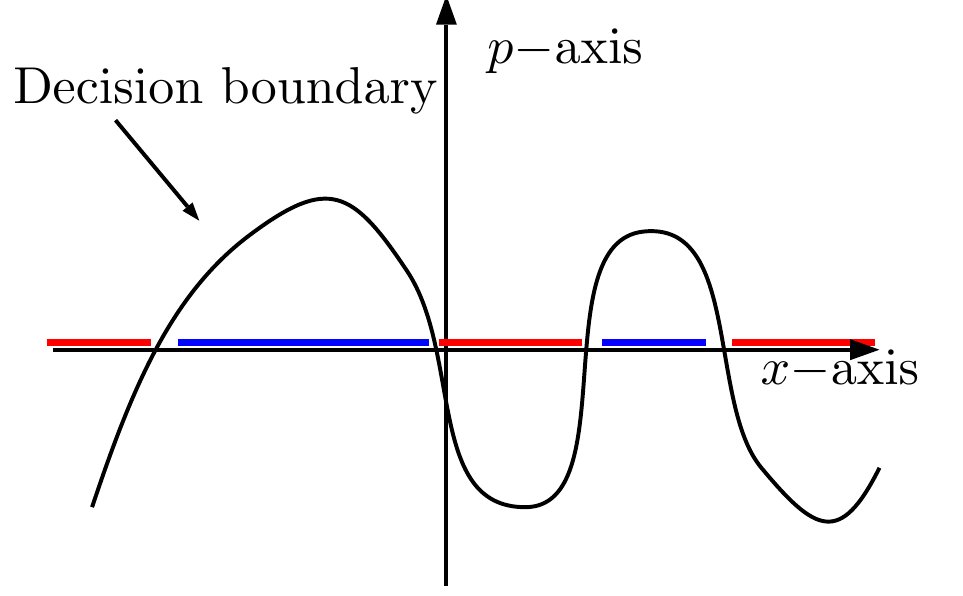}
    \caption{{\footnotesize Example of how, with the enlarged phase-space of Momentum ResNet, one can generate different topologies of the decision boundary. When restricting to initial data of the form $\R\times \{0\}$, the decision boundary can have different topologies. The colors red and blue represent the different classes in the binary classification.}}
    \label{fig:decision}
\end{figure}

On the other hand, in our Corollary \ref{thm:ua} for universal approximation with Momentum ResNets, we have a minimal controllability time which was not present in first-order NODEs. However, we don't know if this is a structural problem or if it is possible to prove this controllability result for any time interval with a different proof.

For Memory NODEs, we easily derived 
simultaneous controllability and universal approximation from the results of \cite{ruizbalet2021neural}, see  Proposition \ref{memorysimcontr}.
More importantly, we have shown the ability of Memory NODEs to approximate maps of the following type
$$C\left(\Omega;\bigg[C([0,T];\mathbb{R}^d)\cap BV([0,T];\mathbb{R}^d)\bigg]\right)$$
where $\Omega\subset\mathbb{R}^d$, see Theorem \ref{trackRNN} and Corollary \ref{cor:uta}. 

The theoretical result is also supported by the simulations (see Figure \ref{fig:sim}): with just five samples, we were able to have relatively good results in both simultaneous tracking and universal tracking approximation.

Memory issues are a central point in the assessment of a model, thus it is worth spending some words on this topic. 
For Momentum ResNets, the authors of \cite{sander2021momentum} were able to use a backpropagation algorithm that requires less memory than with first-order NODEs by exploiting the particular structure of the dynamics. 
Hence, Momentum ResNets appear to be not only more accurate but also more memory-efficient than first-order NODEs.

On the other hand, for the results concerning the Memory NODEs in \eqref{extended}, we need to require the dimension of the memory variable to be at least two times the dimension of the state variable.
However, we point out that the requirement on the memory size depends on the methods of our proof and we do not know if the same results can be achieved with less requirements.

We stress again that both models can be formulated in an integral form, namely 
$$
\dot{x}=-x+\int_0^tw(s)\sigma\left( \langle a(s),x(s)\rangle +b(s)\right)ds
$$
for the Momentum ResNet, and
$$
\dot{x}=W\vec{\sigma}\left( Ax  + C \int_0^t u\sigma\left(\langle d,x\rangle +f\right)ds +b_1\right)+b_2\\
$$
for the Memory NODE. 

A natural extension of this work could concern the study of better approximation properties, for instance, universal approximation in the $H^1$ norm. Furthermore, one could try to use Memory NODEs to obtain better results than the ones in \cite{ruizbalet2021neural} for transport equations, for example, trying an $L^1$-approximate controllability instead of a Wasserstein one.

}

\begin{appendix}

\section{Proofs: Momentum ResNets}\label{s:proofmr}

In this Subsection, we prove Theorem \ref{sim:control}. The proof is divided into three steps.
\begin{enumerate}[leftmargin=5mm]
 \item The first one is a preliminary step.
 \item The second concerns the control of the first component of each point.
 \item The third is related to the control of the remaining components for every point.
\end{enumerate}


{\color{black}

\begin{proof}

\begin{enumerate}[leftmargin=5mm]
 \item \textbf{Preparation of the data set}. Without loss of generality, one can ensure the following:
\begin{enumerate}
 \item There exists $C>0$ such that
 \begin{equation}\label{hyp1}
 \begin{split}
   \min_{i\in\{1,...,N\}}x_i^{(1)}>\max_{j\in\{1,...,N\}} y_j^{(1)}+C\\
 p_i^{(1)}\geq 0 \quad \forall i\in\{1,...,N\}
 \end{split}
 \end{equation}

 \item It holds
 \begin{equation}\label{hyp2}
   x_i^{(1)}(0)\neq x_j^{(1)}(0)\quad \forall i\neq j
 \end{equation}
\end{enumerate}

In order to do so, one can choose a hyperplane of the form $x^{(2)}+b=0$ such that all the data points belong to one side of such hyperplane, with $b>>1$ such that $x_i^{(2)}+b>0$ for all $i$. Then, by choosing $\vec{w}=(0,w^{(2)},0,\dots,0)$ with $w^{(2)}$ big enough, we can guarantee that after an arbitrarily small time all points fulfil \eqref{hyp1}. Indeed, by the variations of constants formula, one obtains
{\small
\begin{equation}\label{intw} 
\begin{split}
&\begin{pmatrix}x^{(1)}(t)\\p^{(1)}(t)\end{pmatrix}=\begin{pmatrix}x^{(1)}(0)+(1-e^{-t})p^{(1)}(0)\\ e^{-t}p^{(1)}(0)\end{pmatrix}+\\
&\int_0^t w^{(2)}(x^{(2)}(s)+b)\begin{pmatrix}1-e^{s-t}\\ e^{s-t} \end{pmatrix}ds.
\end{split}
\end{equation}
} 
For $b$ big enough, all terms in the above integral are positive  between in $(0,t)$. Thus, the flow increases the first component of every point.

Since the field is globally Lipschitz, there is uniqueness and global existence of the solution. This means that, after the procedure above, one has
$$(x_i,p_i)\neq (x_j,p_j)\quad \forall i,j\in\{1,...,N\}.$$
Take two points $i,j$, either \eqref{hyp2} is fulfilled, $x_i^{(1)}\neq x_j^{(1)}$, or there exists a component, $k$, in which the two points differ $x_i^{(k)}\neq x_j^{(k)}$. Take $b=-1/2(x_i^{(k)}+ x_j^{(k)})$ , $a=\delta_{k,k'}$ and $w=(w^{(1)},0,...,0)$. Then, by solving the dynamics for a time that can be choosen arbitarly small, one obtains that $x_i^{(1)}\neq x_j^{(1)}$. This process can be done for every pair of points that share the first component. Note that, we also have that $p_l^{(1)}\geq 0$ for all $l$.

Since this step can be done in arbitrary small time, we will assume that without loss of generality the initial data fulfills \eqref{hyp1} and \eqref{hyp2}.

 \item \textbf{Control the first component.} Since \eqref{hyp1} and \eqref{hyp2} are fulfilled at $t=0$, there exists a positive time $T_1$ such that in the interval $I_1=(0,T_1)$, \eqref{hyp1} and \eqref{hyp2} are fulfilled. Our goal now is to control the first component of each point. We will proceed iteratively, we split $I_1$ in $N$ intervals $I_1=\cup I_{1,i}=\cup (t_{i-1},t_{i})$ with $t_0=0$ and $t_i<\min\{T/2,T_1\}$ for $i=1,...,N$, to be determined later on. In the interval $I_{1,i}$ we will control the position and velocity of the point $i$ so that at the final time will fulfil that $x_i^{(1)}(T)=y_i$ (see Figure \ref{controlfree} for a qualitative representation). 
 However, to be able to achieve exact controllability, 
 we guarantee the property 
 \begin{equation}\label{condsum}
  x_i^{(1)}(t_i)+p_i^{(1)}(t_i)\neq x_j^{(1)}+p_j^{(1)}(t_j).
 \end{equation}
 Thanks to \eqref{condsum}, there does not exist an open interval $I\subset (T/2,T)$ such that
 $$x_i^{(1)}=x_j^{(1)}\quad \forall t\in I.$$
 Therefore, there exists an interval $I\subset (T/2,T)$ such that
 \begin{equation}\label{notequality}
     x_i^{(1)}(t)\neq x_j^{(1)}(t)\quad i\neq j\quad t\in I.
 \end{equation}
 Since \eqref{notequality} is fulfilled, the points can be ordered with respect to their first component and there is a hyperplane of the form $x^{(1)}=c$ separating any two points. The interval $I$ will be used for controlling the remaining components.

 We now ensure controllability of the first component and \eqref{condsum}.
 We will proceed iteratively; for every $i=1,...,N$ we do the following:
 \begin{enumerate}[leftmargin=2mm]
 \item Isolating the target point. Choose $a=(0,1,0,\dots,0)$ and $b$ such that $x^{(1)}_i(t)<b<x^{(1)}_{j}(t)$ for any $j$ such that $x^{(1)}_i(t)<x^{(1)}_{j}(t)$. As we proceeded in \eqref{intw}, one can generate a flow that pushes all the points such that $x^{(1)}_i(t)<x^{(1)}_{j}(t)$ towards $-\infty$ in the $x^{(2)}$ component. This process can be done arbitrarily fast and for any constant $\mathcal{C}$ one can find a control to ensure that
 $$p^{(2)}_j<-\mathcal{C} \text{ } \forall j\text{ s.t. } x^{(1)}_i(t)<x^{(1)}_{j}(t)\quad t \in I_1$$  
 The exact same procedure can be done for all points such that
 $$x^{(1)}_i(t)>x^{(1)}_{j}(t)\quad t \in I_1.$$
 Let us denote by  $t_{i,1}$ the time in which the following is fulfilled
 $$ x_j^{(2)}<x_i^{(2)}\text{ if }j\neq i$$
 $$ p_j^{(2)}<p_i^{(2)}\text{ if }j\neq i$$
 which can be always guaranteed by choosing the appropriate magnitude of $w$. Furthermore, let $t_{i,2}=t_{i}-t_{i,1}$. 
 \item Set the velocity on the first component of the target point so that at the final time reaches its target.
 \begin{equation*}
 \begin{split}
  y_i^{(1)}&=x^{(1)}_i(T)=x_i^{(1)}(t_{i,1})\\
  &+(1-e^{-t_{i,2}}p^{(1)}_i (t_{i,1})\\
  &+(1-e^{-(T-t_i)})e^{-t_{i,2}})p(t_{i,1})\\
  &q(t_{i,2}-1+e^{-t_{i,2}}\\
  &+(1-e^{-(T-t_i)})(1-e^{-t_{i,2}} ))
 \end{split}
 \end{equation*}
 where $q=w^{(1)}_i(x_{i}^{(2)}(t)+b(t))$ and $b(t)$ is chosen in a way that $q$ is time independent, i.e. $b(t)=b^*-x_{i}^{(2)}(t)$ and so that $x_{j}^{(2)}(t)+b(t)<0$ if $j\neq i$, $x_{i}^{(2)}(t)+b(t)>0$.
 
 Furthermore, $q$ depends on $t_i$, $q=q_{t_{i}}$, and it is equal to:
 \begin{equation*}\begin{split}
   q_{t_i}=\bigg( & t_{i,2}-1+e^{-t_{i,2}} \\ &+(1-e^{-(T-t_i)})(1-e^{-t_{i,2}})\bigg)^{-1}\\
     &\bigg( y_i^{(1)}-x^{(1)}_i(T)-x_i^{(1)}(t_{i,1})\\
     &-(1-e^{-t_{i,2}})p^{(1)}_i(t_{i,1})-\\
     &(1-e^{-(T-t_i)})e^{-t_{i,2}}p(t_{i,1})\bigg)   
   \end{split}
\end{equation*}

 \item Ensuring \eqref{condsum}. Note that 
 $$x^{(1)}(t)+p^{(1)}(t)=x^{(1)}(0)+p^{(1)}(0)+q_{t_i}(t_{i}-t_{i,i})$$
 and that $q_{t_i} (t_i-t_{i-1})$ is not constant. Therefore, we can always choose $t_i$ so that \eqref{condsum} is satisfied.

 \end{enumerate}

 \item \textbf{Control the remaining components} in the interval $I$.
 
 Thanks to \eqref{notequality}, we can relabel the points according to the first component, i.e.
 $$i<j \implies x_i^{(1)}(t)< x_j^{(1)}(t)\quad t\in I .$$
 One can proceed iteratively. From $i=1,...,N$ one chooses a hyperplane of the form $x^{(1)}-x^{(1)}_i+r$ with $r>0$ so that
 $$ x^{(1)}_j-x^{(1)}_i+r<0\quad \forall j<i$$
 and then, choose  solve the dynamics with $w=(0,w^{(2)},w^{(3)},...,w^{(d)})$  in a small time interval $I_{2,i}\subset I$ so that the point $i$ at the final time $T$ reaches the target. See Figure \ref{illustration} for an illustation of the procedure
 
\end{enumerate}

\end{proof}

\begin{figure*}
\centering
\begin{subfigure}{.45\textwidth}
  \centering
 \includegraphics[scale=0.23]{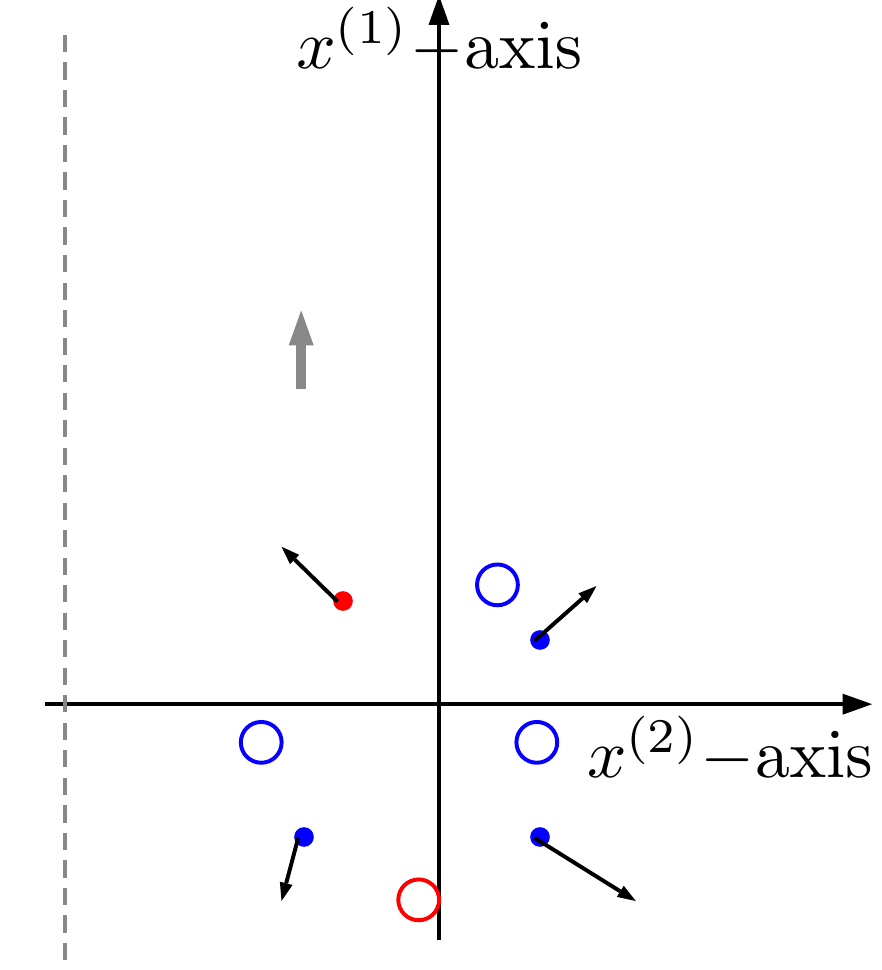}
  \includegraphics[scale=0.23]{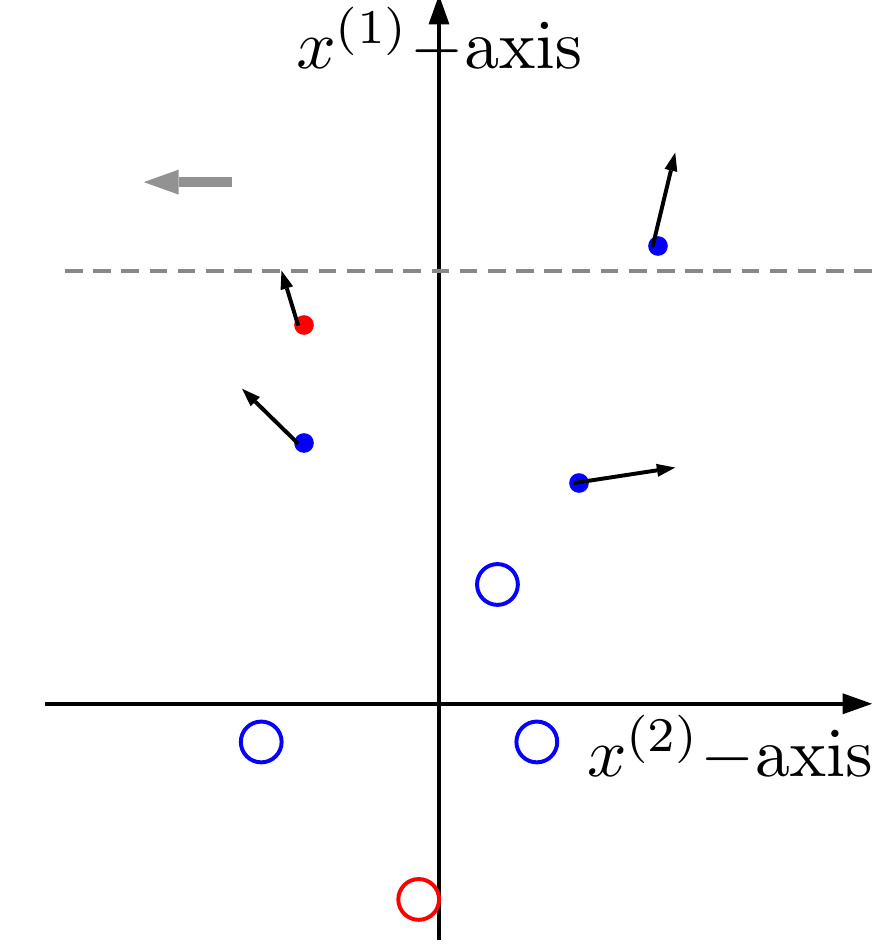}
  
   \includegraphics[scale=0.23]{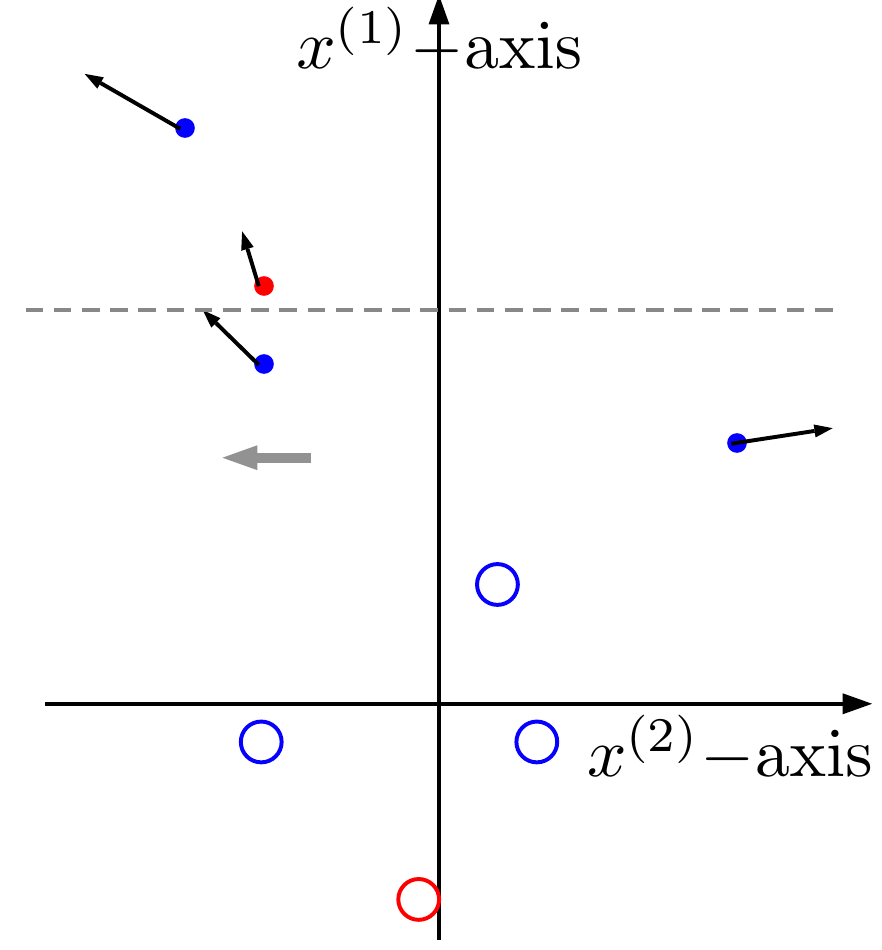}
 \includegraphics[scale=0.23]{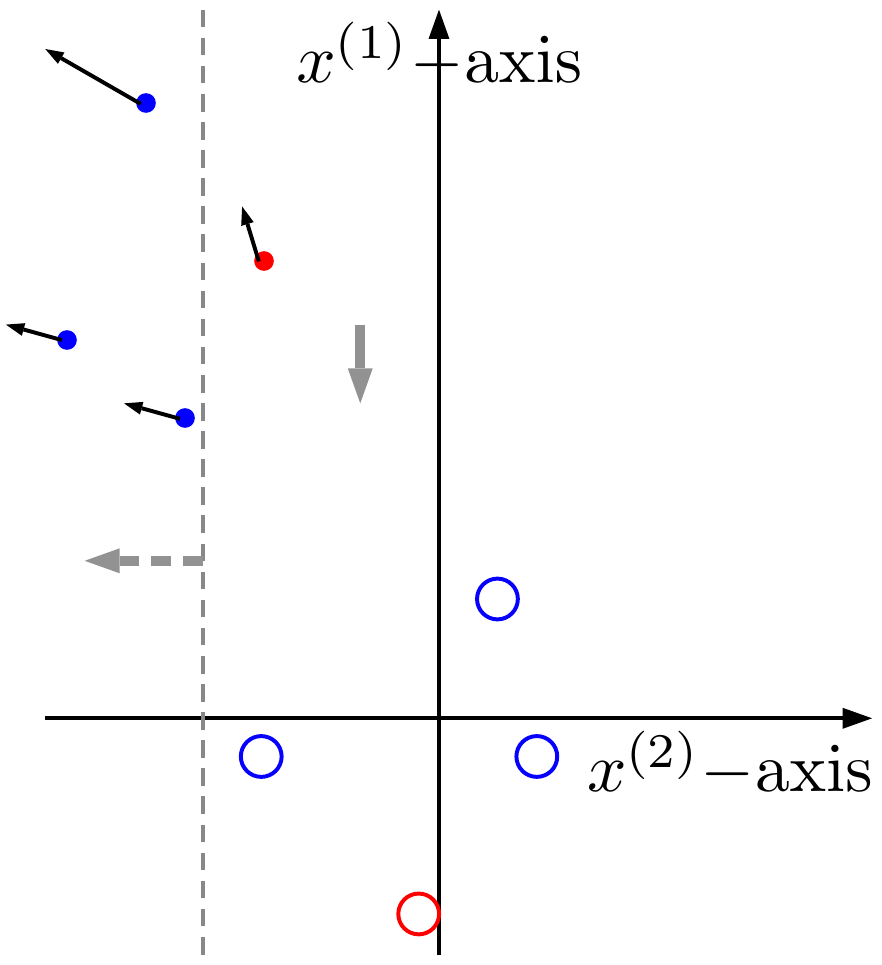}
  \caption{}
\end{subfigure}
\begin{subfigure}{.45\textwidth}
  \centering
 \includegraphics[scale=0.23]{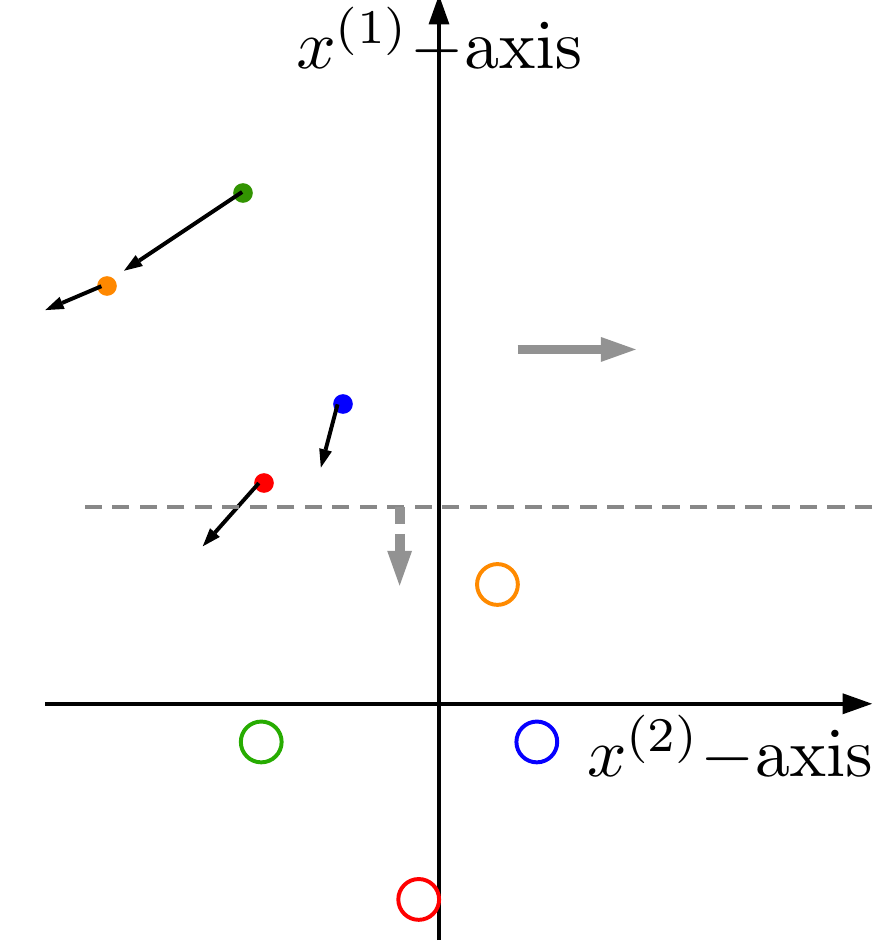}
  \includegraphics[scale=0.23]{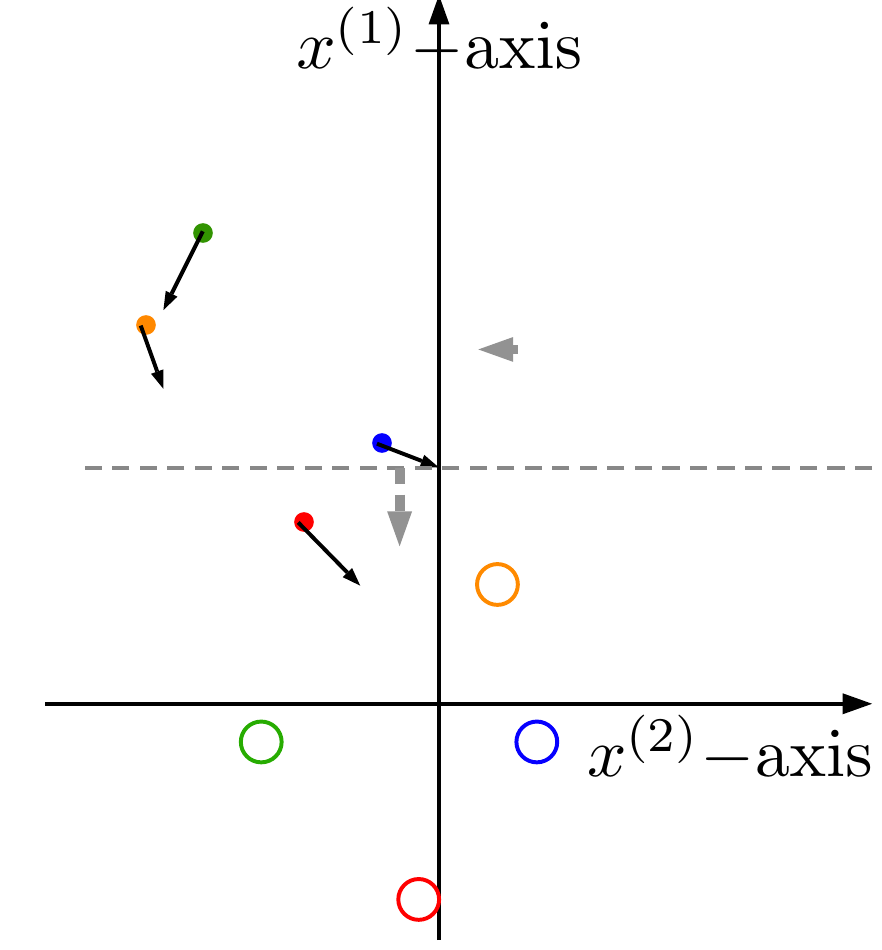}
  
 \includegraphics[scale=0.23]{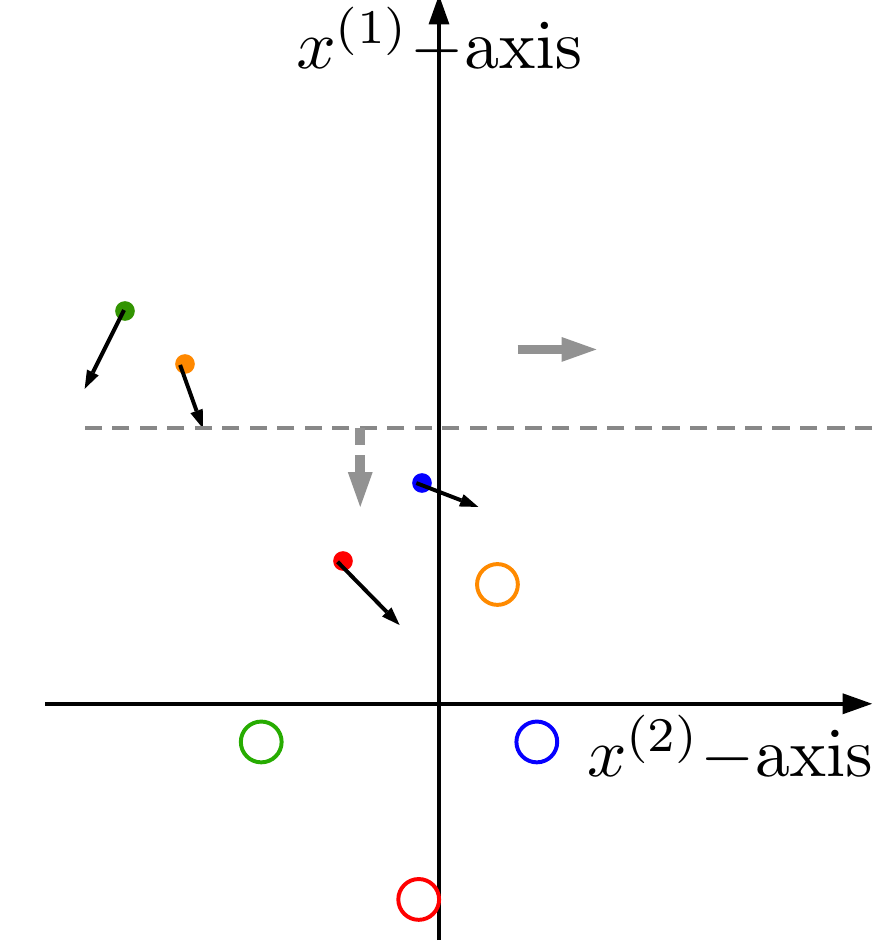}
 \includegraphics[scale=0.23]{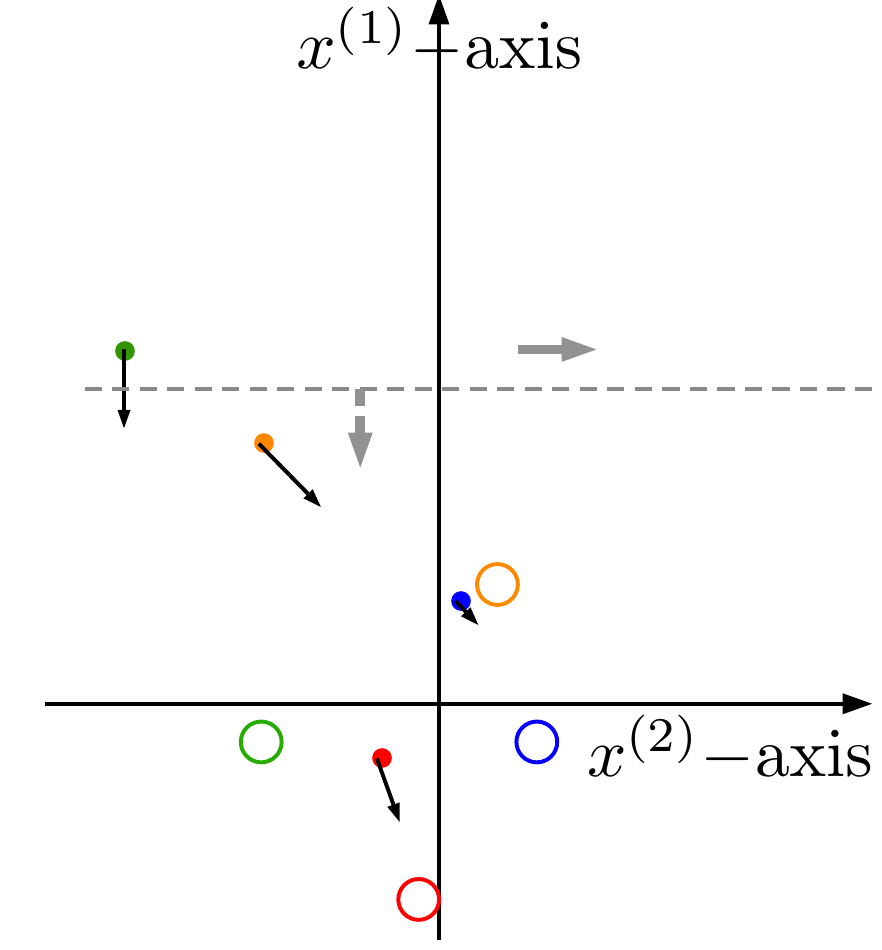}
  \caption{}
\end{subfigure}
\caption{ {\footnotesize  The black arrows represent the direction of the velocity of each point, the circles the target locations of the points. The dashed line is the chosen hyperplane and the gray arrow the direction of the field. The subfigure (a) represents the control of the first coordinate of the red point. (Top left) Preparation of the data set. 
(Top right) We choose a hyperplane slightly above the point we want to control its first component and we push all points above (with respect to the $x^{(1)}$ coordinate) the target point to $-\infty$ in the second component of the system. This process should be done fast enough, so that no point below the hyperplane will cross it. (Bottom-left) The same is applied for all points below the target point. (Bottom-right) The grey dotted arrow attached to the hyperplane indicates that the hyperplane is moving to keep the distance between the target point constant. The target point can be separated from all the others by a hyperplane. We can assign the appropriate velocity in the first component of the system so that it coincides with its target at the final time. This process will be repeated for every point. Subfigure (b). The grey dotted arrow attached to the hyperplane indicates that the hyperplane is moving to keep the distance between the point that has to be controlled and the hyperplane constant. The figure represents how to control the components $x^{(i)}$ for $i=2,...,d$. (Top left) Choice of the hyperplane. (Top right) We choose a hyperplane slightly below the point we want to control and we act on the velocity of every point so that the first one has the desired velocity to reach the target. (Top right) After having acted on the first point we chose a hyperplane above the point that has been controlled so that the next action does not affect it and we act on the remaining points so that the second point has the desired velocity to reach its target. (Bottom) The procedure follows iteratively until all points will reach their location at the final time.}}\label{illustration}
\end{figure*}
}
\section{Proofs: Neural ODE model}\label{appsimcontr}\label{a:ApC}

In this appendix we will prove the simultaneous control of 
\begin{equation}\label{reduced system}
    \begin{cases}
    x'=w(t)\sigma(a(t)x+c(t)p+b(t)\\
    p'=u(t)\sigma(d(t)x+f(t)\\
    \end{cases}
\end{equation}
where $w,u,a,c,b,f\in L^\infty((0,T);\mathbb{R})$ are control functions. The proof follows from Theorem 2 in \cite{ruizbalet2021neural} in a straightforward manner. For the sake of completeness, we will provide here the needed proof in our situation.
\begin{lemma}\label{A:simcontr}
Let $\sigma$ be the ReLU and consider $\{(x_i,p_i)\}_{i=1}^N\subset \mathbb{R}^2$ be distinct initial data and \newline $\{(y_i,\varphi_i)\}_{i=1}^N\subset \mathbb{R}^2$ be distinct target data. Then, for every time horizon $T>0$, there exist controls  $w,u,a,c,b,f\in L^\infty((0,T);\mathbb{R})$ such that the solution at time $T$ satisfies
$$\phi_T(x_i;\omega)=(y_i,p_i) \quad i\in\{1,...,N\}.$$
\end{lemma}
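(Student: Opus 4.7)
The plan is to reduce the claim to the two-dimensional simultaneous control argument of Theorem~2 in \cite{ruizbalet2021neural}, since \eqref{reduced system} offers exactly the two Cartesian families of half-plane translations used there. Setting $u\equiv 0$ and $a\equiv 0$ yields
\begin{equation*}
x'=w(t)\,\sigma(c(t)p+b(t)),\qquad p'=0,
\end{equation*}
which translates $x$ at speed $w(t)$ on the half-plane $\{cp+b>0\}$ and freezes everything on the complementary side while keeping $p$ constant throughout. Dually, $w\equiv 0$ gives
\begin{equation*}
x'=0,\qquad p'=u(t)\,\sigma(d(t)x+f(t)),
\end{equation*}
which translates $p$ at speed $u(t)$ on the half-plane $\{dx+f>0\}$ while freezing $x$. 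These are the analogues of \eqref{prev1}--\eqref{prev2} in the $(x,p)$ plane, so the asymmetry that $\dot p$ depends on $x$ alone causes no real loss: the proof of \cite{ruizbalet2021neural} uses only the availability of such half-plane translation flows.

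Given these two building blocks, I would first devote an arbitrarily short initial subinterval of $(0,T)$ to a preparation phase placing the data in \emph{generic position}, i.e.\ with all $x_i$ pairwise distinct and all $p_i$ pairwise distinct; this is always achievable by a small application of the two Cartesian flows, which, being translations on open half-planes, can break any coincidence in either coordinate. The remaining interval is split into $N$ consecutive subintervals and one point is processed on each. At the $k$-th step, after relabelling according to the current $x$-ordering, pick the extremal not-yet-controlled point $(x_k,p_k)$; choose a vertical line $\{dx+f=0\}$ separating it from every other point (including those already parked at their targets), and apply the second Cartesian flow to update $p_k$ to $\varphi_k$ without moving anyone else. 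Then, with the updated $p_k=\varphi_k$, choose a horizontal line $\{cp+b=0\}$ separating $(x_k,\varphi_k)$ from all other points and apply the first Cartesian flow to update $x_k$ to $y_k$. After both moves, all subsequent hyperplanes are chosen so that $(y_k,\varphi_k)$ lies in the inactive half-plane, parking it at its target until time $T$.

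The main obstacle is the combinatorial bookkeeping: at every substep one must be able to isolate a single point by a half-plane of the admissible form, which requires that after each move the remaining live points keep generic position \emph{and} that the separating line can be slid to avoid every already-parked target. Generic position is preserved automatically by translation-on-a-half-plane flows, since a uniform shift of an active half-plane keeps distinct coordinates distinct; separability against the parked targets is ensured by a suitable choice of extremal point (in $x$ or in $p$) at each step, exactly as in \cite{ruizbalet2021neural}. Finally, the prescribed time horizon $T$ is matched by rescaling the piecewise constant controls in time, since both Cartesian flows are linear in the controls. Beyond these routine details, the construction is a direct transcription of the two-dimensional argument of \cite{ruizbalet2021neural} to the $(x,p)$ phase plane.
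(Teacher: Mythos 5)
Your reduction to the two Cartesian half-plane flows (horizontal translations activated by a line in $p$, vertical translations activated by a line in $x$) is correct, and it is exactly the dynamical toolbox the paper uses. However, the combinatorial scheme you build on top of it is different from the paper's and contains a genuine gap.

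You process one point per subinterval and, for the active point, require that it can be isolated from \emph{all} other points (live and already parked) by a vertical line and, in a second move, by a horizontal line. You rely on the claim that some live point is always extremal in $x$ or in $p$ among the full current configuration, and you assert this is ``ensured by a suitable choice of extremal point.'' This is false in general. Take for instance the initial data $(0,0)$, $(0.1,0.1)$, $(0.2,0.2)$ with targets $(-1,-1)$, $(1,1)$, $(0.15,0.15)$ (all distinct pairs, distinct coordinates). Point $2$ is never extremal among the initial data, so the first processed point must be $1$ or $3$; in every admissible ordering one arrives at the situation where the only remaining live point $(0.1,0.1)$ with target $(1,1)$ is sandwiched, in both coordinates, between the two parked targets $(-1,-1)$ and $(0.15,0.15)$, and therefore cannot be isolated by any horizontal or vertical line. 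The interleaved scheme stalls.

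The paper avoids this obstruction by not relying on \emph{given} extremality: it \emph{manufactures} it. The proof runs in two separate phases. Phase~I controls all the $x_i$ to $y_i$; before moving $x_i$, it first applies two vertical-slab flows (active on $\{x>x_i+\epsilon\}$ and on $\{x<x_i-\epsilon\}$) that push the $p$-coordinates of \emph{all other} points strictly above $p_i$, while leaving $(x_i,p_i)$ untouched. After this deformation, $p_i$ is the unique minimum, so $(x_i,p_i)$ can be isolated by a horizontal line and $x_i$ driven to $y_i$; the other points' $p$-values are spoiled, but that is harmless since Phase~II handles them. Phase~II then controls the $p_i$'s, sweeping left to right in the distinct ordering of the $y_i$'s with vertical lines $\{x>y_i-\epsilon\}$, so already-controlled memories never re-enter an active half-plane. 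This deformation step is the missing ingredient in your argument; without it, ``pick an extremal point'' cannot be guaranteed to succeed. Note also that the paper's preparation step makes the target $x$-coordinates $y_i$ pairwise distinct (running the backward flow on the targets), not the $p_i$'s; your preparation addresses a different degeneracy and leaves the target-side coincidences unresolved.
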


\begin{proof}
The proof is divided in different steps,
 \begin{enumerate}[leftmargin=5mm]
     \item First we will find a set of equivalent initial data points and equivalent targets that satisfy convenient properties.
     \item Secondly we will control the state component of the system.
     \item And finally we will control the memory component of the system.
 \end{enumerate}
 
 Note that a trajectory of \eqref{reduced system} in $(0,T)$ can be seen as a trajectory in the time interval $(0,1)$ up to a rescaling on the time that will be absorbed by the controls $w$ and $u$. 
 
 \begin{enumerate}[leftmargin=5mm]
     \item We will find control functions that allow us to assume without loss of generality that:
     \begin{equation}\label{HA1}x_i\neq x_j\qquad \text{if }i\neq j \end{equation}
     and
     $$y_i\neq y_j\qquad \text{if }i\neq j $$
     Assume that \eqref{HA1} is not satisfied by only two points, let us say $x_1=x_2$. Then, since the points $\{(x_i,p_i)\}_{i=1}^N$ are distinct, we can fix the hyperplane 
     $$p+b=0$$
     with $b=1/2(p_1+p_2)$. This hyperplane separates $(x_1,p_1)$ and $(x_2,p_2)$. Since the activation function is zero in one half space, then by just setting $w=1$ and $a=0$ and solving the ODE \eqref{reduced system} we will generate a flow as in Figure \ref{previousflows}. We solve \eqref{reduced system} for $T$ small enough so that the solution at time $T$ satisfies:
     $$x_i(T)\neq x_j(T)\qquad \text{ if } i\neq j$$
     Note that this argument can be applied iteratively for any number of coincidences on the state.
     
     On the other hand, note that the backward trajectories of \eqref{reduced system}
     are also forward trajectories of \eqref{reduced system} with the appropriate changes in the control functions. This allows us to apply the same argument for the target points whenever there is a coincidence on the target states.
     
      \item For controlling the first component of each data point we will apply recursively three flows.
      
      For each point we apply the following procedure:
       \begin{enumerate}[leftmargin=2mm]
           \item Choose the hyperplane $x-x_i+\epsilon=0$ (with $d=1$ $f=-x_i+\epsilon$) for $\epsilon>0$ small enough so that 
           $$\min_{j} |x_j-x_i+\epsilon|=\epsilon $$
           then we chose $u=1$ and we solve \eqref{reduced system} for $T$ large enough so that
           $$p_i<p_j(T) \quad \text{if }x_j>x_i $$
           \item we repeat the same procedure but by choosing the hyperplane $x-x_i-\epsilon=0$ ($d=-1$ $f=x_i+\epsilon$) for $\epsilon>0$ small enough so that 
           $$\min_{j} |x_j-x_i-\epsilon|=\epsilon $$
           then we chose again $u=1$ and we solve \eqref{reduced system} for $T$ large enough so that
           $$p_i<p_j(T) \quad \text{if }x_j<x_i $$
           
           \item In this way, one has that the point $(x_i,p_i)$ remained without moving and 
           $p_j(T)>p_i$ for all $j$. This means that there exist a hyperplane of the form $p-p_j-\epsilon=0$ (with $c=-1$ $b=p_i+\epsilon$) that separates the point $(x_i,p_i)$ from all the others. This allows us generate a flow that is able to exactly control $x_i$ to $y_i$
       \end{enumerate}
       Up to now, we have been able to control exactly the state of each point.

      \item To control the memory, we first relabel the points in a way that $y_i<y_j$ if $i<j$. For controlling the memory, it is enough to iteratively choose $d=1$ and $f=y_i-\epsilon$ for $\epsilon$ small enough and chose $u$ so that at time $T$ the memory of the point $i$ is controlled. Note that by the procedure above, once a point has been exactly controlled, the next stages do not perturb that point again.
     
 \end{enumerate}
\end{proof}


\subsection{Proof of Theorem \ref{memorysimcontr}}\label{ApC}

\begin{proof}[Proof of Theorem \ref{memorysimcontr}]
We will split the proof in several a cases. The main idea is to group a memory component with a state component and to apply the strategy in \cite{ruizbalet2021neural}.

\textbf{Case $\boldsymbol{d=d_p}$.}
In this case we can consider the pairs
$$(x^{(k)},p^{(k)})\quad \text{for }k=1,..,d $$
The strategy will rely on applying the strategy of \cite{ruizbalet2021neural} to each pair state-memory, see Appendix Lemma \ref{A:simcontr}.

\textbf{Case $\boldsymbol{d>d_p}$.}
As we have noted already, due to the particular strategy of \cite{ruizbalet2021neural}, even though the memory equation does not depend on itself, we can nonetheless have the necessary controls to perform the simultaneous control when we consider pairs of state and memory.

In this case, the dimension of the state is bigger than the dimension of the memory. This invites to consider
$$(x^{(k)},p^{(k)})\quad \text{for }k=1,..,d_p-1 $$
where the previous case immediately applies 
and consider the remaining components altogether
$$(x^{(d_p)},x^{(d_p+1)},...,x^{(d)},p^{(d_p)}) $$

\textbf{Case $\boldsymbol{d<d_p}$}
As mentioned in the previous cases, since the memory equation does not depend on itself, to apply the strategy of \cite{ruizbalet2021neural} we need to group memory variables with state variables. In this case, we have more memory variables than state variables. However we can still manage to reduce it to a simpler case by first controlling $d_p-d$ memory variables using one state component and then reduce to the $d=d_p$ case.

Split the time interval $(0,T/2)$ into $d_p-d$ time subintervals, $I_k$, in each time interval we will consider the pair
$$(x^{(1)},p^{(d+k)})\quad \text{for }k=1,..,d_p-d $$
and control the $p^{(d+k)}$ component.

Afterwards, we are left with the pairs
$$ (x^{(k)},p^{(k)})\quad\text{for }k=1,..,d$$
uncontrolled, but this is resolved with the first case.

\end{proof}

 \subsection{Proof of Theorem \ref{trackRNN}}\label{ApC2}

 \begin{proof}[Proof of Theorem \ref{trackRNN}]
 We will prove the theorem for $d_p=2d$ and this implies also the case $d_p>2d$ by simply ignoring the $d_p-2d$ extra variables. We will divide the proof in several steps
 \begin{enumerate}[leftmargin=5mm]
     \item \textbf{Approximation.} Consider a partition of $(0,T)$ by  $N_t$ intervals $I_k$, $(0,T)=\cup_{k=1}^{N_t} I_k$ so that every $y_i\in C^0((0,T);\mathbb{R}^d)\cap BV((0,T);\mathbb{R}^d)$ is approximated by a piecewise linear function. We consider the piecewise linear function to be all of them linear in every $I_k$ as Figure \ref{fig:piecwise} shows. Let $y_i^h$ be the approximation for $y_i$ and let $y_{i,k}^h$ be the function in the interval $I_k$
     $$\sup_{0\leq t\leq T} |y_i(t)-y_i^h|\leq \epsilon\qquad i\in\{1,...,N\} $$
     
     Furthermore, we can consider an approximation that fulfills
     \begin{equation}\label{hyptrack}
     \begin{array}{l}
      \partial_{x^{(1)}} y_{i,k}^h \neq \partial_{x^{(1)}} y_{j,k}^h \\ \text{if }j\neq i\quad k\in\{1,...,N_t\}.
     \end{array}
     \end{equation}
     Thanks to \eqref{hyptrack} for every $k$ there exist a subinterval $\tilde{I}_k\subset I_k$ such that
     \begin{equation}\label{proptrack}
     \begin{array}{l}
      \left(y_{i,k}^h(x) \right)^{(1)}\neq \left(y_{j,k}^h(x)\right)^{(1)}\\ \forall x\in \tilde{I}_k, \quad \text{if }j\neq i, \quad k\in\{1,...,N_t\}
     \end{array}
     \end{equation}

     \item \textbf{Simultaneous control.} Now we apply Theorem \ref{memorysimcontr} to approximately simultaneously control the state and the memory. We choose for every $i$ we choose $y_{i,1}^h(0)$ as targets for the state component. The targets in the memory component will be chosen accordingly so that the dynamics
     $$x_i'=\sigma(Cp_i)+b_2 $$
     is able to reproduce the required linear trajectories. We discuss the target selection in the next step.
     
     \item \textbf{Alternating strategy.} As we mentioned before, we will make an abuse of notation and consider $p=(p^{(1)},p^{(2)})$ with $p^{(1)},p^{(2)}\in \mathbb{R}^d$. The strategy is visualised in Figure \ref{fig:memdiag}; we will use one component of the memory to endow a linear movement while, in parallel, we will reconfigure the other component of the memory for the approximation in the next interval. This requires to understand two things, first, which targets for the memory we should select (Memory use) and second, how to control the component of the memory not used for controlling (Reconfiguration).
     \begin{enumerate}[leftmargin=2mm]
         \item \textit{Use of the Memory.} Let us consider the linear target functions in $I_k$:
         $$y_{i,k}^h=B_{i,k}+P_{i,k}t,\quad i\in\{1,...,N\},$$
         with $P_{i,k},B_{i,k}\in \mathbb{R}^d$. Consider $A=0,b_1=0,W=I_d$, the dynamics of the state equation is
     $$x'=\boldsymbol{\vec{\sigma}}(Cp)+b_2.$$
     We choose $\mathbb{R}^{d\times d_p}\ni C=(I_d|0)$ and we look for memory targets that are in the positive cone $\{p_{i,k}^{(1)}\}_{i=1}^N \subset\mathbb{R}_{+}^d$. The activation function $\sigma$ is assumed  to be the ReLU, and hence, we reduce the problem on finding $b_{2,k}\in \mathbb{R}^d$ such that
     $$p_{i,k}^{(1)}=P_{i,k}-b_{2,k}\in \mathbb{R}_{+}^d\qquad i\in \{1,...,N\},$$
     which is always possible. Then the dynamics trivially reads
          $$
          x'=P_{i,k},\quad      x(0)=B_{i,k}
          $$
          whose solutions are $P_{i,k}t+B_{i,k}$ for all $i\in\{1,...,N\}$.
         
         

         \item \textit{Reconfiguration.} Since, for every $k$ we have that there exists an interval $\tilde{I_{k}}$ for which \eqref{proptrack} holds, one can control the second component of the memory while the first component is used for following the trajectory. Thanks to \eqref{proptrack} and the continuity of the trajectories, we can consider a subinterval $\dbtilde{I}_{k}$ such that, up to a relabeling of the points we have
         $$\left( y_{i,k}^h \right)^{(1)}<\left(y_{j,k}^h\right)^{(1)}\quad t\in\dbtilde{I}_k,\quad \text{if } i<j.$$
         We set $d=(1,0,...,0)$ and  we split $\dbtilde{I}_k$ into $N$ subintervals, name them $\dbtilde{I}_{k,i}$ and we proceed to control the memory component that is not used to control the state sequentially. Set 
         $$f(t)=\left( y_{i,k}^h(t) \right)^{(1)}-\delta $$
         for $\delta>0$ small enough so that
         $$\left( y_{j,k}^h(t) \right)^{(1)}<f(t)\quad t\in \dbtilde{I}_{k,i}\quad \text{if }j<i $$
         We settle the memory targets for the next interval $I_{k+1}$ as in the previous step and then we choose
         $$ u=\beta \left( p_{i,k+1}-p_{i,k-1}\right)$$
         for $\beta\in \mathbb{R}$ so that when we solve the ODE
         $$p'=u\sigma(\langle d,x\rangle +f) \quad t\in \dbtilde{I}_{k,i} $$
         the solution at $t_{k,i}:=\sup \dbtilde{I}_{k,i}$ satisfies that $p_i(t_{k,i})=p_{i,k+1}$.
         Note that if $j<i$, the equation becomes
         $$p'=0$$
         so, once we have  controlled the memory for one point, in the subsequent steps this memory will be unaltered. Therefore, we are able to exactly control the memory component where we desire.

     \end{enumerate}

 \end{enumerate}
 \end{proof}

\end{appendix}
\phantomsection
\section*{Acknowledgments} 

We sincerely thank Carlos Esteve-Yagüe for his valuable conversations and comments.

This work has been funded by the European Research Council (ERC) under the
European Union’s Horizon 2020 research and innovation programme (grant agreement NO: 694126-DyCon). The work of the third author is also partially supported by the Air Force Office of Scientific Research (AFOSR) under Award NO: FA9550-18-1-0242, by the Grant MTM2017-92996-C2-1-R COSNET of MINECO (Spain), by the Alexander von Humboldt-Professorship program, the European Unions Horizon 2020 research and innovation programme under the Marie Sklodowska-Curie grant agreement No.765579-ConFlex, and the Transregio 154 Project “Mathematical Modelling, Simulation and Optimization Using the Example of Gas Networks” of the German DFG.

\bibliographystyle{abbrv}
\bibliography{biblio.bib}

\end{document}